\documentclass[12pt]{amsart}
\usepackage{amsfonts}
\usepackage{mathrsfs}
\usepackage{bbm}
\usepackage{amsfonts}
\usepackage{mathrsfs}
\usepackage{bbm}
\usepackage{amsfonts,amsthm}
\usepackage{amssymb,amsmath,graphicx}
\usepackage{cases}
\usepackage{bm}
\usepackage{float}
\usepackage[colorlinks=true]{hyperref}
\hypersetup{urlcolor=blue, citecolor=red}
\usepackage{hyperref}
\usepackage{enumerate}
\usepackage{cite}
\usepackage{amsfonts,amsthm}
\usepackage{amssymb,amsmath,graphicx}
\usepackage{cases}
\usepackage{float}
\usepackage[colorlinks=true]{hyperref}
\usepackage{cleveref}
\hypersetup{urlcolor=blue, citecolor=red}
\numberwithin{equation}{section}
\setlength{\paperheight}{11in}
\setlength{\paperwidth}{8.5in}
\addtolength{\voffset}{-0.25in}
\addtolength{\hoffset}{-0.75in}
\setlength{\textwidth}{6.7in}
\setlength{\textheight}{8.35in}
\setlength{\footskip}{36pt}
\setlength{\marginparsep}{0pt}
\setlength{\marginparwidth}{0in}
\setlength{\headheight}{8pt}
\setlength{\headsep}{20pt}
\setlength{\oddsidemargin}{0.75in}
\setlength{\evensidemargin}{0.75in}

\usepackage{cleveref}
\crefname{equation}{}{}



\newtheorem{theorem}{Theorem}[section]
 
\newtheorem{lemma}[theorem]{Lemma} 
\newtheorem{proposition}[theorem]{Proposition} 
 
\newtheorem{remark}[theorem]{Remark}

\theoremstyle{definition}

\def\XXint#1#2#3{{\setbox0=\hbox{$#1{#2#3}{\int}$ }
		\vcenter{\hbox{$#2#3$ }}\kern-.6\wd0}}

\def\qaq{\quad\text{and}\quad}

\everymath{\displaystyle}

\begin{document}
	\title[On the parametric Schr\"{o}dinger-Poisson system]{Remarks on the parametric Schr\"{o}dinger-Poisson system}
	
\author{Chen Huang}
\address{College of Science, University of Shanghai for Science and Technology, Shanghai, 200093, China}
\email{chenhuangmath111@163.com}
\author{Sihua Liang}
\address{College of Mathematics, Changchun Normal University, Changchun, Jilin, 130032, China}
\email{liangsihua@ccsfu.edu.cn}
\author{Lei Ma}
\address{College of Science, University of Shanghai for Science and Technology, Shanghai, 200093, China}
\email{leima@usst.edu.cn}
\author{Patrizia Pucci}
\address{Department of Mathematics and Computer Science, University of Perugia, Via Vanvitelli 1, 06123 Perugia, Italy}
\email{patrizia.pucci@unipg.it}
	\date{}
	\begin{abstract} The paper is concerned with the existence of solutions for the following Schr\"{o}dinger-Poisson system involving different potentials:
	\begin{equation*}
	\begin{cases}
			-\Delta u+V(x)u-\lambda \phi u=f(u)&\quad\text{in}~\mathbb R^3,\\
		-\Delta\phi=u^2&\quad\text{in}~\mathbb R^3.
	\end{cases}
\end{equation*}
 We first consider the case when the potential  $V$ is positive and radial, so that the mountain pass theorem could be applied. In the second case we assume that the potential $V$ is coercive and sign-changing, which means that the Schr\"{o}dinger operator $-\Delta +V$ is allowed to be indefinite. To deal with this more difficult case, using a local linking argument and the Morse theory, we
 prove that the system has a nontrivial solution. Furthermore, we also show the asymptotical behavior of this solution. Additionally, the proofs rely on new ideas regarding the solutions of the Poisson equation.
As a main novelty with respect to corresponding results in \cite{MR4527586, MR3148130, MR2810583}, we only assume that $f$ satisfies the super-linear growth condition at the origin.
We believe that the methodology developed here can be adapted to study related problems concerning the existence of solutions for the Schr\"{o}dinger-Poisson system.
\\[3mm]{\bf Keywords:} Schr\"{o}dinger-Poisson system; Perturbation problems; Mountain Pass theorem; Local linking; Non-trivial solutions.
		 	\\[3mm]{\bf Mathematics Subject Classification 2020:} 35A15, 35B38, 35J50
	\end{abstract}
	\maketitle

	\section{Introduction and Main Results}
This paper is concerned with the existence of nontrivial solutions of the nonlinear Schr\"{o}dinger-Poisson system 
\begin{equation}\label{eq1}
	\begin{cases}
			-\Delta u+V(x)u-\lambda \phi u=f(u)&\quad\text{in }\mathbb R^3,\\
		-\Delta\phi=u^2&\quad\text{in }\mathbb R^3,
	\end{cases}
\end{equation}
where $V$ and $f$ are given functions and $\lambda$ is a positive constant. To find the mountain pass solution, we only need the following assumptions:\\
${\bf (V)}$ $V\in C(\mathbb{R}^3)$ is radial, i.e., $V(x)=V(|x|)$, $x\in\mathbb R^3$, $\inf\limits_{\mathbb R^3} V(x)= V_0>0$.\\
${\bf (f)}$ $f\in C(\mathbb{R})$ and $\lim\limits_{t\rightarrow0}\dfrac{f(t)}{t}=0.$\\
System \eqref{eq1} arises when we focus on standing waves $\psi(t,x)=e^{-i\omega t/\sqrt{2m}}u(x)$ for the time-dependent Schr\"{o}dinger equation
$$i\sqrt{2m}\frac{\partial \psi}{\partial t}=-\Delta \psi+U(x)\psi-\lambda\phi \psi-g(|\psi|)\psi,\quad (t,x)\in \mathbb{R}^{+}\times \mathbb{R}^3 $$
with potential $\phi$ depending on the wave function $\psi$ via the Poisson equation
$$-\Delta\phi=|\psi|^{2} \quad \mbox{in }\ \mathbb{R}^3.$$
That is why we call system \eqref{eq1} the stationary Schr\"{o}dinger Poisson system. Indeed, in \eqref{eq1},
$$V(x) = U(x)-\omega\ \mbox{ and }\ f(t) = g(|t|)t.$$
The Schr\"{o}dinger-Poisson system  \eqref{eq1} has been extensively studied, especially when $\lambda<0$. In recent years, there has been an increasing attention to the existence of positive solutions, ground states, sign-changing solutions and multiplicity of solutions related to system \eqref{eq1}. Ruiz \cite{MR2230354} focused on the following problem
\begin{equation*}
	\begin{cases}
			-\Delta u+V(x)u-\lambda \phi u=|u|^{p-2}u &\quad\text{in }\mathbb R^3,\\
		-\Delta\phi=u^2 &\quad\text{in }\mathbb R^3,
	\end{cases}
\end{equation*}
and gave existence and nonexistence, depending on the parameters $p\in(2,6)$ and $\lambda<0$. In particular, if $\lambda\leq-1/4$, the author showed that $p = 3$ is a critical value for the existence of positive solutions. By using the concentration compactness principle, Azzollini
and Pomponio in \cite{MR2422637} proved the existence of a ground state solution of system \eqref{eq1}, when $f(u)=|u|^{p-2}u$ and $p\in(3,6)$. Let us also mention the papers \cite{MR3500305,MR4288156,MR4417277,MR4527586,MR4593001,MR4790968,MR4803693,MR2230354,MR2099569,MR1659454,MR2595202,MR2422637,MR2417922}.

From the above references, it can be seen that when $\lambda<0$, the existence of solutions to  system \eqref{eq1} is closely related to the growth of the nonlinear term (i.e., the range of value of $p$). However, when the parameter $\lambda > 0$, the situation changes. In fact, when $V(x)\equiv V_0>0$, Mugnai in \cite{MR2810583} proved that there exist infinitely many triples $(\lambda,u,\phi)\in\mathbb{R}^+\times H^1(\mathbb{R}^3)\times D^{1,2}(\mathbb{R}^3)$ solving system \eqref{eq1}. 
In \cite{MR2810583} the parameter $\lambda$ is a Lagrange multiplier and is part of the constructed solutions, 
and the nonlinearity $f$ in \cite{MR2810583} satisfies the following conditions:\\
({\bf $\tilde f_{1}$}): $f\in C(\mathbb{R})$ and $f(0)=0$;\\
({\bf $\tilde f_{2}$}): there are two positive constants $C_1$, $C_2$ such that $$|f(t)|\leq C_{1}|t|^{p-1}+C_{2}|t|^{q-1} \quad\text{ with}\ \ 2<p<q<6;$$
({\bf $\tilde f_{3}$}): there is a positive constant $\mu>2$,  such that $0\leq tf(t)\leq\mu F(t)$ with $F(t)=\displaystyle{\int_{0}^tf(s)}\text{d}s;$\\
({\bf $\tilde f_{4}$}): $f$ is an odd function in $\mathbb R$.

For $\lambda>0$ sufficiently large, Jeong and Seok in \cite{MR3148130} proved the existence of a radial solution under the conditions $V(x)\equiv V_0>0$, $({\bf f})$, and the following assumption:\\
${\bf (\tilde f)}$:
    $\varlimsup_{|t|\to\infty}|t|^{1-p}|f(t)|< \infty$ for some $p\in(2,6)$ uniformly for $x\in\mathbb{R}^3$.\\
As a consequence, this generalized the assumptions requested
by Mugnai in \cite{MR2810583}.

The method in \cite{MR3148130} relies on the non-degeneracy of the solutions of the limiting equation to prove the boundedness of the Palais Smale (PS) sequences of the energy functional  corresponding to system \eqref{eq1}. As a result, it removes the Ambrosetti-Rabinowitz (AR) condition requested by Mugnai in \cite{MR2810583}, retaining only the growth condition. This is to ensure that the energy functional  corresponding to the equation can be defined in a Hilbert space.

A natural question then arises: {\it Can the conditions at infinity on the nonlinear term $f$ be removed, i.e. ${\bf (\tilde f)}$?} This is one of the major innovative points of this paper. Another significant innovation is that our approach is completely different from that in  \cite{MR3148130}. We have proved an additional property of the solutions of the Poisson equation. Taking advantage of this property, we are able to prove the boundedness of the (PS) sequences of the energy functional corresponding to system \eqref{eq1}, without relying on the non-degeneracy of the original problem.

Before stating the first main result, we introduce some preliminaries.
As  working space, we consider the Hilbert space 
\begin{equation*}
	H^1_r(\mathbb R^3)=\{u\in H^1(\mathbb{R}^3)\,:\,u(x)=u(|x|)\},
\end{equation*}
with the norm
\begin{equation*}
	\|u\|_{H^1_r(\mathbb R^3)}=\left(\int_{\mathbb{R}^3}\left(|\nabla u|^2+V(x)u^2\right)\mbox{d}x\right)^{\frac{1}{2}}.
\end{equation*}
It is well-known that $H^1_r(\mathbb R^3)$ is compactly embedded in $L^p(\mathbb{R}^3)$ for all $p$, with $2<p<6$.

Let $\lambda=\epsilon^{-2}$, with $\epsilon>0$, and let
 \begin{equation*}
	\bar u=\frac{u}{\epsilon}.
	\end{equation*}
With abuse of the notations, we still label $\bar u$ by $u$. Then system \eqref{eq1} is equivalent to  
\begin{equation}\label{eq2}
	\begin{cases}
-\Delta u+V(x) u-\phi u=\frac{1}{\epsilon}f(\epsilon u)&\quad\text{in }\mathbb R^3,\\
-\Delta\phi= u^2&\quad\text{in }\mathbb R^3.
\end{cases}
\end{equation}
From now on, we shall consider system \eqref{eq2} 
instead of \eqref{eq1}. Theorem~1 in Section~2.2.1
of \cite{MR2597943} shows that the solution of
the  Poisson equation $\eqref{eq1}_2$ is
\begin{equation}\label{eq4}
	\phi_u(x)=\frac{1}{4\pi}\int_{\mathbb R^3}\frac{u^2(y)}{|x-y|}\text{d}y,\quad x\in\mathbb R^3.
\end{equation}
Moreover, Proposition 7 of \cite{MR3148130} gives that
\begin{equation}\label{phi_estimate}
	\int_{\mathbb{R}^3}|\nabla\phi_u|^2\text{d}x=\int_{\mathbb R^3}\phi_uu^2\text{d}x\leq C\|u\|_{H^1_r(\mathbb R^3)}^4.
\end{equation}
The subscript $u$ of $\phi$ denotes the solution of system \eqref{eq2}  corresponding to $u^2$. 

The constant $C$ here and in the rest of the paper may change from line to line as long as what their dependence is clear.

Now, we able to present our first result.
\begin{theorem}\label{thm} If assumptions ${\bf (V)}$ and ${\bf (f)}$ hold, then there exists a number $\lambda^*>0$ such that if $\lambda\geq \lambda^*$  system \eqref{eq1} admits a non-trivial weak solution $u$, that is
	\begin{equation*}
			\int_{\mathbb R^3}\left(\nabla u\nabla\psi +V(x)u\psi \right)\text{d}x-\lambda \int_{\mathbb R^3} \phi u\psi \text{d}x=\int_{\mathbb R^3}f(u)\psi \text{d}x\ \ ~\text{for~all}~\psi\in C_0^\infty(\mathbb R^3)
	\end{equation*}
and
	\begin{equation*}
	\int_{\mathbb R^3}\nabla \phi\nabla \eta \text{d}x=\int_{\mathbb R^3}u^2\eta \text{d}x\ \ ~\text{for~all}~\eta\in C_0^\infty(\mathbb R^3).
\end{equation*}
	\end{theorem}

\begin{remark}{\rm
In our approach, the following inequality
$$\int_{\mathbb R^3}\phi_uu^2\text{d}x=\int_{\mathbb R^3}|\nabla\phi_{u}|^2\text{d}x\geq\int_{\mathbb R^3}|u|^3\text{d}x-\frac{1}{4}\int_{\mathbb R^3}|\nabla u|^2\text{d}x$$
is crucial. It is worth emphasizing that this inequality depends solely on the properties of the Poisson equation itself. To the best of our knowledge, few papers have utilized this inequality to prove the boundedness of the (PS) sequences. To some extent, our work  improves the related results in \cite{MR2810583,MR3148130} mentioned above.}
\end{remark}

We emphasize that in Theorem \ref{thm}, we
only considered the case where the Schr\"{o}dinger operator $-\Delta+V$ is positive definite. In this
situation the zero function $u={\bf 0}$ is a local minimizer of 
\begin{equation}\label{IDEF}
I(u)=\frac{1}{2}\int_{\mathbb R^3}\left(|\nabla u|^2+V(x)u^2\right)\text{d}x-\frac{1}{4}\int_{\mathbb R^3}\phi_{u} u^2\text{d}x-\int_{\mathbb R^3}F(u)\text{d}x,
\end{equation}
with $\phi_u$ given in \eqref{eq4}, and we may apply the mountain pass theorem.

However, if we are interested in standing waves $\psi(t,x)=e^{-i\omega t/\sqrt{2m}}u(x)$ with large $\omega$, then $V(x)$ may be negative somewhere and $u=  0$ is no longer a local minimizer of $I$. When $\lambda<0$, the corresponding indefinite
situation is studied in Liu and Wu \cite{MR3656292}.

It is interesting to note that unlike many works in this field, the second result in this paper is when $\lambda>0$ and the potential $V$ is sign-changing. We present our assumptions on the potential $V$ and the nonlinearity $f$.\\
${\bf(V_1)}$: $V\in C(\mathbb{R}^3)$ is bounded from below and, $\mu(V^{-1}(-\infty,M]))<\infty$ for all $M>0$, where $\mu$ is the Lebesgue measure on $\mathbb{R}^3$;\\
${\bf(V_2)}$: $0$ is not an eigenvalue of $-\Delta+V(x)$.\\
${\bf (f)}$: $f\in C(\mathbb{R})$ and
$\displaystyle{\lim\limits_{t\to0}\dfrac{f(t)}{t}}=0;$\\
${\bf (f')}$: there exists $\delta_0>0$ such that
$f(t)t\geq 0$ for all $t$, with $|t|\leq\delta_0.$

Inspired by \cite{MR3656292,MR3843290} and combining a local linking argument of Li and Willem in \cite{MR1312028} 
together with the infinite-dimensional Morse theorem in \cite{MR1196690}, we obtain the second main result.

\begin{theorem}\label{thm2}
 If assumptions ${\bf (V_1)}$, ${\bf (V_2)}$, ${\bf (f)}$ and ${\bf (f')}$ hold, then there exists  a number $\lambda^{**}>0$ such that for any $\lambda\ge \lambda^{**}$, system \eqref{eq1} has a nontrivial solution $u$, that is
	\begin{equation*}
			\int_{\mathbb R^3}\left(\nabla u\nabla\psi +V(x)u\psi \right)\text{d}x-\lambda \int_{\mathbb R^3} \phi u\psi \text{d}x=\int_{\mathbb R^3}f(u)\psi \text{d}x\ \ ~\text{for~all}~\psi\in C_0^\infty(\mathbb R^3)
	\end{equation*}
and
	\begin{equation*}
	\int_{\mathbb R^3}\nabla \phi\nabla \eta \text{d}x=\int_{\mathbb R^3}u^2\eta \text{d}x\ \ ~\text{for~all}~\eta\in C_0^\infty(\mathbb R^3).
\end{equation*}
\end{theorem}

\begin{remark}{\rm
Theorem \ref{thm2} can be understood in another way.
Assumption ${\bf (V_1)}$ allows us to choose $m>0$ such that
\begin{equation}\label{tilde_V}\tilde V(x)=V(x)+m>1 
\quad\mbox{for all }x\in\mathbb{R}^3.
\end{equation}
Hence, system \eqref{eq1} is equivalent to  
\begin{equation*}
	\begin{cases}
			-\Delta u+\tilde V(x)u-\lambda \phi u=\tilde f(u) &\quad\text{in }\mathbb R^3,\quad\tilde f(u)=f(u)+mu,\\
		-\Delta\phi=u^2 &\quad\text{in }\mathbb R^3.
	\end{cases}
\end{equation*}
A straightforward computation shows that the potential $\tilde V(x)$ is positive and coercive, and $\tilde f(u)$ has a local linear growth, in other words,
$$\tilde f(t)\in C(\mathbb{R})\ \mbox{ and }\ \lim\limits_{t\rightarrow0}\frac{\tilde f(t)}{t}=m>0.$$
Thus the essential difference between Theorem \ref{thm} and Theorem \ref{thm2} lies in the different behavior of the nonlinearity at the origin.}
\end{remark}

\begin{remark}{\rm
The innovations of this paper are as follows:\\
$(1)$ In terms of results, through Theorem \ref{thm} and Theorem \ref{thm2}, we show that when the parameter $\lambda$ is sufficiently large, the existence of solutions to the system depends only on the state of the nonlinear term at the origin.\\
$(2)$ In terms of proof techniques, we propose a completely new method. Additionally, by means of new properties on the solutions to the Poisson equation, we obtain the boundedness of the (PS) sequences of the modified functionals. To the best of our knowledge, this phenomenon is also a new discovery.}
\end{remark}

We also study the asymptotical behavior of non-trivial solution $u^\lambda$ with respect to $\lambda$.

\begin{theorem}\label{thm3} Let ${\bf (V)}$ and ${\bf (f)}$
be satisfied and let $u^\lambda$ be a non-trivial weak solutions of system \eqref{eq1}. Then, as $\lambda\to\infty$,
\begin{equation*}
	u^{\lambda}\to 0~\text{strongly~in}~H^{1}_{r}(\mathbb{R}^3),\quad
	\sqrt{\lambda}u^{\lambda}\to u_0~\text{strongly~in}~H^{1}_{r}(\mathbb{R}^3),
\end{equation*}		
where $u_0$ is a non-trivial weak solution of the following Choquard equation
\begin{equation*}
			-\Delta u+V(x)u=\frac{1}{4\pi}\int_{\mathbb R^3}\frac{u^2(y)}{|x-y|}\text{d}y~u\ \ ~\text{in}~\mathbb{R}^3.
\end{equation*}
\end{theorem}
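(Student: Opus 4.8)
The plan is to exploit the scaling that already connects \eqref{eq1} and \eqref{eq2}. First I would set $v^\lambda:=\sqrt{\lambda}\,u^\lambda$ and record that, since $\phi_{\alpha u}=\alpha^2\phi_u$, the function $v^\lambda$ is a weak solution of \eqref{eq2} with $\epsilon=\lambda^{-1/2}$, i.e.
\begin{equation*}
-\Delta v^\lambda+V(x)v^\lambda-\phi_{v^\lambda}v^\lambda=\tfrac{1}{\epsilon}f(\epsilon v^\lambda),\qquad \epsilon=\lambda^{-1/2}.
\end{equation*}
In this way the whole statement reduces to the behaviour of $v^\lambda$ as $\epsilon\to0^+$: if I can show $v^\lambda\to u_0$ strongly in $H^1_r(\mathbb R^3)$ with $u_0\neq0$ solving the Choquard equation, then $u^\lambda=\epsilon v^\lambda\to0$ strongly (because $\|v^\lambda\|_{H^1_r}$ stays bounded) while $\sqrt{\lambda}\,u^\lambda=v^\lambda\to u_0$, which are exactly the two assertions.

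Second, I would establish uniform-in-$\lambda$ bounds $0<c_0\le\|v^\lambda\|_{H^1_r}\le C_0$ for all large $\lambda$. The upper bound comes from revisiting the energy estimates used to prove Theorem \ref{thm}: since $v^\lambda$ is a critical point of the functional $I_\epsilon$ associated with \eqref{eq2} at a mountain pass level $c_\epsilon$, combining $I_\epsilon(v^\lambda)=c_\epsilon$ with $\langle I_\epsilon'(v^\lambda),v^\lambda\rangle=0$ and the Poisson inequality highlighted in the Remark after Theorem \ref{thm} yields coercive control of $\|v^\lambda\|_{H^1_r}$, provided $c_\epsilon$ stays bounded above; the latter follows by running the min-max along a fixed ray $t\mapsto tw_0$ and letting $\epsilon\to0$, where $I_\epsilon\to I_0$, the Choquard functional, uniformly on bounded $t$-intervals. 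The lower bound uses the mountain pass geometry itself: because the nonlinear contribution $\epsilon^{-2}F(\epsilon v)$ is $o(\|v\|^2)$ by ${\bf (f)}$, one has a uniform positive barrier $I_\epsilon(v)\ge\rho>0$ on a small sphere $\|v\|=r$ with $r,\rho$ independent of $\epsilon$, so $c_\epsilon\ge\rho$ forces $\|v^\lambda\|_{H^1_r}$ away from $0$.

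Third, with $\{v^\lambda\}$ bounded I would pass to a subsequence $v^\lambda\rightharpoonup u_0$ in $H^1_r(\mathbb R^3)$, hence, by the compact embedding $H^1_r(\mathbb R^3)\hookrightarrow L^p(\mathbb R^3)$ for $2<p<6$, strongly in every such $L^p$ and almost everywhere. The delicate point is the nonlinear term: I must show $\int_{\mathbb R^3}\epsilon^{-1}f(\epsilon v^\lambda)\psi\,\text{d}x\to0$ for every $\psi\in C_0^\infty(\mathbb R^3)$. Writing $\epsilon^{-1}f(\epsilon v^\lambda)=\frac{f(\epsilon v^\lambda)}{\epsilon v^\lambda}\,v^\lambda$ and noting $\epsilon v^\lambda=u^\lambda\to0$ a.e., assumption ${\bf (f)}$ forces the factor $f(\epsilon v^\lambda)/(\epsilon v^\lambda)\to0$ a.e.; together with the uniform growth control furnished by the modification scheme and the $L^p$-boundedness of $v^\lambda$, a Vitali/dominated-convergence argument gives the claim. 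The convolution term passes to the limit via Hardy--Littlewood--Sobolev and the strong $L^{12/5}$ convergence, so $u_0$ is a weak solution of $-\Delta u_0+V(x)u_0=\phi_{u_0}u_0$.

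Finally, I would upgrade to strong convergence and non-triviality. Testing the equation for $v^\lambda$ against $v^\lambda$ and the limit equation against $u_0$, and using that both the convolution and nonlinear terms converge, gives $\|v^\lambda\|_{H^1_r}^2\to\|u_0\|_{H^1_r}^2$; combined with $v^\lambda\rightharpoonup u_0$ in the Hilbert space $H^1_r(\mathbb R^3)$, this yields $v^\lambda\to u_0$ strongly, and the lower bound $\|v^\lambda\|_{H^1_r}\ge c_0$ then forces $u_0\neq0$. I expect the main obstacle to be the uniform boundedness of $\{v^\lambda\}$ as $\epsilon\to0$, that is, verifying that the mountain pass levels $c_\epsilon$ neither blow up nor collapse; passing the growthless nonlinear term to its zero limit is the second delicate point, and it is precisely there that the origin condition ${\bf (f)}$ together with the modification method does the essential work.
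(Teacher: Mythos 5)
Your proposal is correct and follows essentially the same route as the paper: rescale to $v^\lambda=\sqrt{\lambda}\,u^\lambda$, which is the mountain pass solution $u_\epsilon$ of \eqref{eq2} with $\epsilon=\lambda^{-1/2}$, use the uniform mountain pass bounds (level bounded above along a fixed ray, bounded below by the geometry, plus the Poisson inequality) to control $\|u_\epsilon\|_{H^1_r(\mathbb{R}^3)}$, and let $\epsilon\to0$ so that the nonlinear term vanishes (by ${\bf (f)}$ together with the growth control coming from the modification) and the Choquard equation emerges in the limit. The only difference is packaging: the paper shows $\{u_\epsilon\}$ is a (PS) sequence of the Choquard functional $I_0$ and invokes the (PS) condition of $I_0$ to get strong convergence in one step, whereas you identify the weak limit and upgrade to strong convergence via norm convergence by hand; your explicit lower bound $\|v^\lambda\|_{H^1_r(\mathbb{R}^3)}\geq c_0>0$ also makes the non-triviality of $u_0$ explicit, a point the paper leaves implicit.
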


The paper is organized as follows. In Section 2, we propose a new modified problem and we show the close relationship between the solutions of the modified problem and the solutions of the original problem \eqref{eq2}. In Section 3, we prove that the modified functional $I_{\epsilon}$ has critical points
in $H^1_r(\mathbb R^3)$ for $\epsilon>0$ small enough, and when the parameter $\lambda$ is sufficiently large, these critical points is exactly the weak solutions of the original problem 
\eqref{eq2}. In Section 4, we prove that similar results in the case of sign-changing potentials $V$. In Section 5, we give the proof of the asymptotical behavior of non-trivial solutions.

\section{The modified problem}\label{section_2}
Since the condition ${\bf (f)}$ only involves the information at the origin, a truncation of $F$ is introduced such that the corresponding functional is well-defined in the working space. It is worth emphasizing that this modification method is new and specially constructed based on the system \eqref{eq2}, as stated in 
Theorem~\ref{thm3}.

Write
\begin{equation*}
	d(t)\begin{cases}=1,&\quad\text{if } |t|<1,\\
		\in(0,1),&\quad\text{if } 1\leq|t|\leq 2,\\
		=0,&\quad\text{if } |t|> 2,
	\end{cases}
\end{equation*}
and $d(t)\in C_0^{\infty}(\mathbb{R})$ satisfying $-C(d)\leq d'(t)t\leq0$, where $C(d)$ is a positive constant. The truncation about $F$ is defined as
\begin{equation}\label{eq5}
	\tilde F(t)=d\left(\frac{t}{\sqrt{\epsilon}}\right)F(t)+\left(1-d\left(\frac{t}{\sqrt{\epsilon}}\right)\right)|t|^3,
\end{equation}
where $\epsilon\in(0,1)$ is a constant.
Then the directly calculation gives that
\begin{equation*}
\begin{split}
	|\tilde F'(t)|&\leq\left|d'(\frac{t}{\sqrt{\epsilon}}) F(t)\frac{1}{\sqrt{\epsilon}}\right|+\left|d(\frac{t}{\sqrt{\epsilon}})f(t)\right|+\left|d'(\frac{t}{\sqrt{\epsilon}})\frac{1}{\sqrt{\epsilon}}|t|^3\right|+\left|3(1-d(\frac{t}{\sqrt{\epsilon}}))t^{2}\right|\\
	&\leq\left|d'(\frac{t}{\sqrt{\epsilon}}) \frac{t}{\sqrt{\epsilon}}\right|\left|\frac{F(t)}{t}\right|+\left|d(\frac{t}{\sqrt{\epsilon}})f(t)\right|+\left|d'(\frac{t}{\sqrt{\epsilon}})\frac{t}{\sqrt{\epsilon}}\right|t^{2}
+\left|3(1-d(\frac{t}{\sqrt{\epsilon}}))t^{2}\right|.
	\end{split}
\end{equation*}
For any $\delta>0$, it follows from
 $-C(d)\leq d'(t)t\leq0$ and  ${\bf (f)}$ that there is a constant $C_\delta$ depending on $\delta$ such that
\begin{equation}\label{eq6}
	|\tilde F(t)|\leq \delta t^2+C_\delta |t|^3\qaq|\tilde F'(t)|\leq\delta |t|+C_\delta |t|^2,
\end{equation}
where $C_\delta$ is a constant independent of $\epsilon$.

According to system \eqref{eq5}, the following modified problem is considered instead of system \eqref{eq2}
\begin{equation}\label{eq7}
	\begin{cases}
		-\Delta u+V(x) u-\phi u=\frac{1}{\epsilon}\tilde F'(\epsilon u) &\quad\text{in }\mathbb R^3,\\
		-\Delta\phi= u^2 &\quad\text{in }\mathbb R^3.
	\end{cases}
\end{equation}
Therefore, the corresponding functional is defined as
\begin{equation}\label{I_epsilon_1}
	\text{$I_{\epsilon}(u)$}=\frac{1}{2}\int_{\mathbb R^3}\left(|\nabla u|^2+V(x)u^2\right)\text{d}x-\frac{1}{4}\int_{\mathbb R^3}\phi_{u} u^2\text{d}x-\frac{1}{\epsilon^2}\int_{\mathbb R^3}\tilde F(\epsilon u)\text{d}x.
\end{equation}
The directly calculation gives that $I_{\epsilon}(u)$ is a $C^1$ functional on $H^{1}_{r}(\mathbb{R}^3)$ and for $u,v\in H^1_r(\mathbb R^3)$, one has
\begin{equation*}
\langle I_{\epsilon}'(u),v\rangle=\int_{\mathbb R^3}\left(\nabla u\nabla v+V(x)uv\right)\text{d}x-\int_{\mathbb R^3}\phi_uuv\text{d}x-\frac{1}{\epsilon}\int_{\mathbb R^3}\tilde F'(\epsilon u)v\text{d}x.
\end{equation*}
\section{The existence of parameter-dependent Schr\"{o}dinger-Poisson system }

In this section, the non-trivial solution of system \eqref{eq1} is obtained and give the proof of Theorem \ref{thm}. Firstly, the mountain pass theorem is applied to prove the existence of critical points for the modified functional $I_{\epsilon}$. Secondly, the truncation of $\tilde F$ is removed when $\epsilon>0$ is sufficiently small by Moser iteration. Finally,
the mountain pass critical point of the modified functional $I_{\epsilon}$ is actually the solution to the system \eqref{eq2}.

\begin{lemma}\label{lem1}
	\text{$I_{\epsilon}(u)$} possesses the following properties:
\begin{flalign}
			\label{i}
			\tag{{$i$}}
			&\text{There are constants $\rho>0$ and $\beta>0$ independent of $\epsilon$ such that}&
		\end{flalign}
	$$I_{\epsilon}(u)\geq\beta>0\text{ for $u\in S_\rho=\{u:\|u\|_{H^1_r(\mathbb R^3)}=\rho\}$}.$$
\begin{flalign}
	\label{ii}
	\tag{{$ii$}}
	&\text{There exists a function $\zeta\in H^{1}_{r}(\mathbb{R}^3)$ such that $\|\zeta\|_{H^1_r(\mathbb R^3)}>\rho$ and $I_{\epsilon}(\zeta)<0$.}&
\end{flalign}
\end{lemma}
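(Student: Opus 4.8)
The plan is to verify the two standard mountain pass geometry conditions for the modified functional $I_{\epsilon}$, exploiting the estimates \eqref{eq6} on $\tilde F$ and the fact that the constant $C_\delta$ there is \emph{independent} of $\epsilon$, which is exactly what will allow $\rho$ and $\beta$ to be chosen independently of $\epsilon$ as claimed. Throughout I write $\|u\|=\|u\|_{H^1_r(\mathbb R^3)}$.

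For part \eqref{i}, I would start from the definition \eqref{I_epsilon_1} and bound the nonlinear term from above. Using the first inequality in \eqref{eq6} with the substitution $t=\epsilon u$, one has
\begin{equation*}
\frac{1}{\epsilon^2}\int_{\mathbb R^3}|\tilde F(\epsilon u)|\,\text{d}x\leq \frac{1}{\epsilon^2}\int_{\mathbb R^3}\bigl(\delta\,\epsilon^2 u^2+C_\delta\,\epsilon^3|u|^3\bigr)\,\text{d}x=\delta\int_{\mathbb R^3}u^2\,\text{d}x+C_\delta\,\epsilon\int_{\mathbb R^3}|u|^3\,\text{d}x.
\end{equation*}
Since $\int u^2\leq C\|u\|^2$ (as $V\geq V_0>0$) and, by the compact embedding into $L^3$, $\int|u|^3\leq C\|u\|^3$, this gives a bound $\delta C\|u\|^2+C_\delta\,\epsilon\,C\|u\|^3$. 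The Poisson term satisfies $\frac{1}{4}\int\phi_u u^2\leq C\|u\|^4$ by \eqref{phi_estimate}. Hence
\begin{equation*}
I_{\epsilon}(u)\geq \tfrac{1}{2}\|u\|^2-C\|u\|^4-\delta C\|u\|^2-C_\delta\,\epsilon\,C\|u\|^3.
\end{equation*}
Choosing $\delta$ small so that $\delta C\leq\tfrac14$ fixes $C_\delta$, and then taking $\|u\|=\rho$ small, the quartic and cubic terms are dominated by the quadratic one, so $I_{\epsilon}(u)\geq \tfrac18\|u\|^2=\tfrac18\rho^2=:\beta>0$. Since $\epsilon\in(0,1)$, the factor $\epsilon$ in the cubic term only helps, and $\rho,\beta$ depend on the fixed constants $C,C_\delta$ but not on $\epsilon$.

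For part \eqref{ii}, I would fix any $v\in H^1_r(\mathbb R^3)\setminus\{0\}$, say with $\|v\|=1$, and study $I_{\epsilon}(tv)$ as $t\to+\infty$. The delicate point is that the quadratic term $\tfrac12 t^2$ and the Poisson term $-\tfrac14 t^4\int\phi_v v^2$ (which scales as $t^4$ and is negative) are both present, so the Poisson term alone already drives $I_{\epsilon}(tv)\to-\infty$ as $t\to+\infty$ once one checks the nonlinear truncation term does not overwhelm it. To control the nonlinear term I use the lower bound built into the truncation: for $|\epsilon t v|>2\sqrt{\epsilon}$ one has $\tilde F(\epsilon tv)=|\epsilon tv|^3\geq 0$, so $-\tfrac{1}{\epsilon^2}\int\tilde F(\epsilon tv)\leq \tfrac{C_\delta}{\epsilon^2}\int_{\{|\epsilon tv|\leq 2\sqrt\epsilon\}}|\tilde F|\,\text{d}x$ is bounded by a term of order at most $t^3$ via \eqref{eq6}. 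Then $I_{\epsilon}(tv)\leq \tfrac12 t^2+C t^3-\tfrac14 t^4\int\phi_v v^2\to-\infty$, since the negative quartic term dominates. Thus for $t$ large enough I set $\zeta=tv$ to get $\|\zeta\|>\rho$ and $I_{\epsilon}(\zeta)<0$.

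The main obstacle is part \eqref{i}: one must be careful that the constant $\beta$ and the radius $\rho$ are genuinely independent of $\epsilon$. This hinges entirely on the fact, emphasized after \eqref{eq6}, that $C_\delta$ does not depend on $\epsilon$, together with the rescaling identity $\frac{1}{\epsilon^2}\tilde F(\epsilon u)$ which converts the growth bounds into $\epsilon$-independent (or $\epsilon$-favorable) estimates on the whole-space integral. Part \eqref{ii} is comparatively routine because the negative quartic Poisson term guarantees coercivity in the wrong direction, but I would still verify that the truncated nonlinearity contributes at most a cubic-in-$t$ correction so as not to interfere with the sign of the limit.
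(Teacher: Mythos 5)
Your proposal is correct and takes essentially the same route as the paper: for part \eqref{i} it combines \eqref{phi_estimate} with \eqref{eq6}, absorbs the quadratic contribution by choosing $\delta$ small, and uses $\epsilon\in(0,1)$ to make the cubic term's constant $\epsilon$-independent, yielding $\rho,\beta$ independent of $\epsilon$; for part \eqref{ii} it lets the strictly negative quartic Poisson term (note $\int_{\mathbb{R}^3}\phi_v v^2\,\text{d}x>0$ for $v\neq 0$) dominate the at-most-cubic truncated nonlinearity. The only cosmetic difference is that in \eqref{ii} you split the integral at $|\epsilon t v|=2\sqrt{\epsilon}$, whereas the paper simply applies the global bound \eqref{eq6}; both give the same $O(t^3)$ control.
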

\begin{proof}
	It follows from \eqref{phi_estimate} and \eqref{eq6} that
\begin{equation*}
	\begin{split}
	I_{\epsilon}(u)&=\frac{1}{2}\|u\|^2_{H^1_r(\mathbb R^3)}-\frac{1}{4}\int_{\mathbb R^3}\phi_u u^2\text{d}x-\frac{1}{\epsilon^2}\int_{\mathbb R^3}\tilde F(\epsilon u)\text{d}x\\
	&\geq\frac{1}{2}\|u\|^2_{H^1_r(\mathbb R^3)}-\frac{C}{4}\|u\|^4_{H^1_r(\mathbb R^3)}-\frac{1}{\epsilon^2}\int_{\mathbb R^3}\left(\delta |\epsilon u|^2+C_\delta|\epsilon u|^3\right)\text{d}x.\\
\end{split}
\end{equation*}
Choosing $\delta>0$ small, one gives
 \begin{equation*}
	\begin{split}
		I_{\epsilon}(u)\geq \frac{1}{3}\|u\|^2_{H^1_r(\mathbb R^3)}-\frac{C}{4}\|u\|^4_{H^1_r(\mathbb R^3)}-C\|u\|^3_{H^1_r(\mathbb R^3)}
	\end{split},
\end{equation*}
where the Sobolev inequality has been used. This implies  \eqref{i}.

For \eqref{ii}, take a $h\in C_0^\infty(\mathbb{R}^3)$, $h\neq0$ and let $s$ be a constant large enough. Substituting $sh$ in $I_\epsilon$ yields
\begin{equation*}
\begin{split}
	I_{\epsilon}(sh)&\leq\frac{s^2}{2}\|h\|^{2}_{H^1_r(\mathbb R^3)}-\frac{s^4}{4}\int_{\mathbb R^3}\phi_hh^2\text{d}x+\frac{1}{\epsilon^2}\int_{\mathbb R^3}\left(\delta \epsilon^2s^2h^2+C_\delta \epsilon^3s^3|h|^3\right)\text{d}x\\
	&\leq\frac{s^2}{2}\|h\|^{2}_{H^1_r(\mathbb R^3)}-\frac{s^4}{4}\int_{\mathbb R^3}\phi_hh^2\text{d}x+\int_{\mathbb R^3}\left(\delta s^2h^2+C_\delta s^3|h|^3\right)\text{d}x.
\end{split}
\end{equation*}
Thanks to $\int_{\mathbb{R}^3}\phi_{h}h^{2}\text{d}x\geq0$, then $I_{\epsilon}(sh)\rightarrow-\infty$ as $s\rightarrow\infty.$ Hence, the lemma is completed with $\zeta=sh$.
\end{proof}
According to Lemma \ref{lem1}, it follows from the mountain pass theorem in Ambrosetti and Rabinowitz \cite{MR370183} that there is a sequence $\{u_n\}_{n=1}^\infty\subset H^1_r(\mathbb R^3)$ such that
\begin{equation}\label{eq8}
	I_{\epsilon}(u_n)\rightarrow c \qaq I_{\epsilon}'(u_n)\rightarrow 0\ \text{ as }n\rightarrow\infty
\end{equation}
with $c$ be a positive constant.
\begin{lemma}\label{lem2}
Let $\{u_n\}_{n=1}^\infty\subset H^1_r(\mathbb R^3)$ satisfies \eqref{eq8}. There exists a constant $\epsilon_*\in(0,1)$ such that if $0<\epsilon<\epsilon_*$, then $u_n\rightarrow u\text{ strongly in }H^1_r(\mathbb R^3)$ with $u\in H^1_r(\mathbb R^3)$.
\end{lemma}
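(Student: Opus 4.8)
The plan is to establish the Palais--Smale condition in three stages: bound $\{u_n\}$ in $H^1_r(\mathbb R^3)$, extract a subsequence converging weakly in $H^1_r(\mathbb R^3)$ and strongly in every $L^p(\mathbb R^3)$ with $2<p<6$ via the compact embedding, and finally upgrade this to strong convergence in $H^1_r(\mathbb R^3)$. The genuine difficulty is the first stage: because $\lambda>0$ the quartic Poisson term enters the functional with a minus sign and so \emph{lowers} $I_\epsilon$, and the hypotheses on $f$ provide no Ambrosetti--Rabinowitz type control. This is precisely the point at which the Poisson inequality recorded in the Remark after Theorem~\ref{thm} must be exploited.

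For the boundedness I would test the Palais--Smale sequence against the combination $I_\epsilon(u_n)-\tfrac13\langle I_\epsilon'(u_n),u_n\rangle$, which yields
\begin{align*}
I_\epsilon(u_n)-\tfrac13\langle I_\epsilon'(u_n),u_n\rangle
&=\tfrac16\|u_n\|_{H^1_r(\mathbb R^3)}^2+\tfrac{1}{12}\int_{\mathbb R^3}\phi_{u_n}u_n^2\,\text{d}x\\
&\quad-\tfrac{1}{\epsilon^2}\int_{\mathbb R^3}\tilde F(\epsilon u_n)\,\text{d}x+\tfrac{1}{3\epsilon}\int_{\mathbb R^3}\tilde F'(\epsilon u_n)u_n\,\text{d}x.
\end{align*}
The factor $\tfrac13$ is chosen so that the Poisson contribution, which appears with coefficient $-\tfrac14$ in $I_\epsilon$ and $-1$ in $\langle I_\epsilon'(u_n),u_n\rangle$, is converted into the \emph{favourable} coefficient $+\tfrac{1}{12}$. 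By \eqref{eq8} the left-hand side is at most $c+1+o(1)\|u_n\|_{H^1_r(\mathbb R^3)}$, while the two truncation integrals are controlled through \eqref{eq6} by $C\bigl(\delta\|u_n\|_{H^1_r(\mathbb R^3)}^2+\epsilon C_\delta\int_{\mathbb R^3}|u_n|^3\,\text{d}x\bigr)$, with $C_\delta$ independent of $\epsilon$. The decisive step is then to invoke the Remark's inequality $\int_{\mathbb R^3}\phi_{u_n}u_n^2\,\text{d}x\geq\int_{\mathbb R^3}|u_n|^3\,\text{d}x-\tfrac14\int_{\mathbb R^3}|\nabla u_n|^2\,\text{d}x$ on the $+\tfrac{1}{12}$ term, so that a strictly positive multiple of $\int_{\mathbb R^3}|u_n|^3\,\text{d}x$ is produced on the left while only $-\tfrac{1}{48}\|u_n\|_{H^1_r(\mathbb R^3)}^2$ is sacrificed. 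Fixing $\delta$ small enough that the coefficient of $\|u_n\|_{H^1_r(\mathbb R^3)}^2$ remains positive, and then choosing $\epsilon_*$ so small that $\epsilon C_\delta<\tfrac{1}{12}$ whenever $\epsilon<\epsilon_*$, both the $\|u_n\|_{H^1_r(\mathbb R^3)}^2$ and the $\int_{\mathbb R^3}|u_n|^3\,\text{d}x$ terms carry positive coefficients, and boundedness of $\|u_n\|_{H^1_r(\mathbb R^3)}$ follows.

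With boundedness in hand I would pass to a subsequence with $u_n\rightharpoonup u$ in $H^1_r(\mathbb R^3)$ and $u_n\to u$ in $L^p(\mathbb R^3)$ for every $p\in(2,6)$. To obtain strong convergence I would examine $\langle I_\epsilon'(u_n)-I_\epsilon'(u),u_n-u\rangle$, whose quadratic part is exactly $\|u_n-u\|_{H^1_r(\mathbb R^3)}^2$; the whole expression tends to $0$ since $I_\epsilon'(u_n)\to0$ and $u_n-u\rightharpoonup0$. It then remains to verify that the two nonlinear contributions vanish. For the truncated term, the uniform bound $|\tilde F'(t)|\leq\delta|t|+C_\delta|t|^2$ shows the associated superposition operator is continuous on $L^2(\mathbb R^3)\cap L^3(\mathbb R^3)$, so $\tfrac1\epsilon\int_{\mathbb R^3}(\tilde F'(\epsilon u_n)-\tilde F'(\epsilon u))(u_n-u)\,\text{d}x\to0$ from the $L^2$ and $L^3$ convergence. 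For the Poisson term I would split $\phi_{u_n}u_n-\phi_u u=\phi_{u_n}(u_n-u)+(\phi_{u_n}-\phi_u)u$ and estimate both pieces by Hölder's inequality, using that $\{\phi_{u_n}\}$ is bounded in $D^{1,2}(\mathbb R^3)\hookrightarrow L^6(\mathbb R^3)$ and that $u_n\to u$ in $L^{12/5}(\mathbb R^3)$. Collecting these facts gives $\|u_n-u\|_{H^1_r(\mathbb R^3)}\to0$, which is the claim.
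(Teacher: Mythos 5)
Your first stage coincides with the paper's own argument almost verbatim: the same combination $I_\epsilon(u_n)-\tfrac13\langle I_\epsilon'(u_n),u_n\rangle$, the same use of the Poisson inequality $\int_{\mathbb R^3}\phi_{u_n}u_n^2\,\text{d}x\geq\int_{\mathbb R^3}|u_n|^3\,\text{d}x-\tfrac14\int_{\mathbb R^3}|\nabla u_n|^2\,\text{d}x$ to convert the $+\tfrac1{12}$ Poisson contribution into a positive multiple of $\int_{\mathbb R^3}|u_n|^3\,\text{d}x$ at the cost of $\tfrac1{48}\|u_n\|^2_{H^1_r(\mathbb R^3)}$, and the same smallness requirements on $\delta$ and $\epsilon$. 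Your treatment of the Poisson term in the convergence step is also essentially the paper's (the paper estimates $|\phi_{u_n}u_n(u_n-u)|+|\phi_u u(u_n-u)|$ by the triangle inequality instead of your algebraic split, but both rest on $\phi_{u_n}\to\phi_u$ in $L^6(\mathbb R^3)$ and $u_n\to u$ in $L^{12/5}(\mathbb R^3)$).

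There is, however, a genuine gap in your handling of the truncation term. You assert that $\tfrac1\epsilon\int_{\mathbb R^3}(\tilde F'(\epsilon u_n)-\tilde F'(\epsilon u))(u_n-u)\,\text{d}x\to0$ because the superposition operator is continuous on $L^2(\mathbb R^3)\cap L^3(\mathbb R^3)$ and one has ``$L^2$ and $L^3$ convergence''. But there is no strong $L^2$ convergence available: the compact embedding of $H^1_r(\mathbb R^3)$ holds only into $L^p(\mathbb R^3)$ for $2<p<6$ (as your own second stage states), and it genuinely fails at $p=2$; for instance $u_n(x)=n^{-3/2}u(x/n)$ is bounded in $H^1_r(\mathbb R^3)$, converges weakly to $0$, yet has constant $L^2$ norm, and a merely bounded-below potential $V\geq V_0>0$ does not restore this compactness. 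Hence the contribution of the linear part $\delta|t|$ in the bound $|\tilde F'(t)|\leq\delta|t|+C_\delta t^2$, namely $\delta\int_{\mathbb R^3}|u_n||u_n-u|\,\text{d}x\leq\delta\|u_n\|_{L^2(\mathbb R^3)}\|u_n-u\|_{L^2(\mathbb R^3)}$, cannot be shown to vanish as $n\to\infty$; it is only bounded by $C\delta$. This is exactly why the paper does not prove that this term tends to zero directly: it shows instead that for each fixed $\delta>0$ the $C_\delta$-part vanishes by strong $L^3$ convergence and the $\tilde F'(\epsilon u)$-part vanishes by weak convergence, yielding $\|u_n-u\|^2_{H^1_r(\mathbb R^3)}\leq o_n(1)+C\delta$ with $C$ independent of $\delta$, and only then invokes the arbitrariness of $\delta$ in \eqref{eq6} to conclude. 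With this reordering of quantifiers (first $n\to\infty$, then $\delta\to0$) your argument goes through; as written, the step fails.
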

\begin{proof}
	Let $\mu$ be a positive constant to be determined later. It follows from \eqref{eq6} that
	\begin{equation}\label{eq9}
		\begin{split}
		&\quad I_{\epsilon}(u_n)-\frac{1}{\mu}\langle I_{\epsilon}'(u_n),u_n\rangle\\
		&=\left(\frac{1}{2}-\frac{1}{\mu}\right)\int_{\mathbb R^3}\left(|\nabla u_n|^2+V(x)u_n^2\right)\text{d}x+\left(\frac{1}{\mu}-\frac{1}{4}\right)\int_{\mathbb R^3}\phi_{u_n}u_n^2\text{d}x-\frac{1}{\epsilon^2}\int_{\mathbb R^3}\tilde F(\epsilon u_n)\text{d}x\\
		&\quad\quad-\frac{1}{\mu\epsilon^2}\int_{\mathbb R^3}\tilde F'(\epsilon u_n)\epsilon u_n\text{d}x\\
		&\geq\left(\frac{1}{2}-\frac{1}{\mu}\right)\int_{\mathbb R^3}\left(|\nabla u_n|^2+V(x)u_n^2\right)\text{d}x+\left(\frac{1}{\mu}-\frac{1}{4}\right)\int_{\mathbb R^3}\phi_{u_n}u_n^2\text{d}x\\
		&\quad\quad-\left(1+\frac{1}{\mu}\right)\int_{\mathbb R^3}\left(\delta |u_n|^2+\epsilon C_\delta |u_n|^3\right)\text{d}x.
	\end{split}\end{equation}
Multiplying $|u_n|$ on both side of $\eqref{eq1}_2$ gives
\begin{equation}\label{eq10}
	\int_{\mathbb R^3}|u_n|^3\text{d}x=\int_{\mathbb R^3}{\nabla\phi_{u_n}}\nabla(|u_n|)\text{d}x\leq\int_{\mathbb R^3}|\nabla\phi_{u_n}|^2\text{d}x+\frac{1}{4}\int_{\mathbb R^3}|\nabla u_n|^2\text{d}x.
\end{equation}
Combining \eqref{phi_estimate} and \eqref{eq10} yields
\begin{equation}\label{eq11}
\int_{\mathbb R^3}\phi_{u_n}u_n^2\text{d}x=\int_{\mathbb R^3}|\nabla\phi_{u_n}|^2\text{d}x\geq \int_{\mathbb R^3}|u_n|^3\text{d}x-\frac{1}{4}\int_{\mathbb R^3}|\nabla u_n|^2\text{d}x.
\end{equation}
It follows from \eqref{eq9} and \eqref{eq11} that
\begin{equation*}
	\begin{split}
	&\quad I_{\epsilon}(u_n)-\frac{1}{\mu}\langle I_{\epsilon}'(u_n),u_n\rangle\\
	&\geq \left(\left(\frac{1}{2}-\frac{1}{\mu}\right)-\frac{1}{4}\left(\frac{1}{\mu}-\frac{1}{4}\right)-\delta\left(1+\frac{1}{\mu}\right)\right)\int_{\mathbb R^3}\left(|\nabla u_n|^2+V(x)u_n^2\right)\text{d}x\\
	&\quad\quad+\left(\frac{1}{\mu}-\frac{1}{4}-C_\delta\epsilon\right)\int_{\mathbb R^3}|u_n|^3\text{d}x.
\end{split}\end{equation*}
Let $\mu=3$ and $\delta=\frac{1}{64}$. Then, there exists a constant $\epsilon_*>0$ small enough such that when   $0<\epsilon\leq\epsilon_*$, one has
$$\left(\frac{1}{3}-\frac{1}{4}-C_{\frac{1}{64}}\epsilon\right)\geq0.$$
Consequently,
\begin{equation*}
	I_{\epsilon}(u_n)-\frac{1}{3}\langle I_{\epsilon}'(u_n),u_n\rangle\geq \frac{1}{8}\|u_n\|^{2}_{H^1_r(\mathbb R^3)}.
	\end{equation*}
	
On the other hand, since $\{u_n\}_{n=1}^\infty$ is a (PS) sequence for $I_\epsilon$, it implies
$$c+o_{n}(1)+o_{n}(1)\|u_n\|_{H^1_r(\mathbb R^3)}\geq I_{\epsilon}(u_n)-\frac{1}{3}\langle I_{\epsilon}'(u_n),u_n\rangle.$$
Combining the above two estimates yields
\begin{equation*}
\|u_n\|_{H^1_r(\mathbb R^3)}<\infty.
\end{equation*}
Then there is a subsequence of $\{u_n\}_{n=1}^\infty$ (still labeled by $u_n$) and $u\in H^{1}_{r}(\mathbb{R}^{3})$ such that
\begin{equation}\label{eq12}
	u_n\rightarrow u \text{ weakly in }H^1_r(\mathbb R^3),\ \ u_n\rightarrow u\text{ strongly in }L^p(\mathbb R^3)(2<p<6) \ \ \text{as } n\to\infty.
\end{equation}
The straightforward computations give
\begin{equation}\label{eq13}
	\begin{split}
	&\quad\langle I_{\epsilon}'(u_n)-I_{\epsilon}'(u),u_n-u \rangle\\
	&=\|u_n-u\|^{2}_{H^1_r(\mathbb R^3)}-\int_{\mathbb R^3}(\phi_{u_n}u_n-\phi_uu)(u_n-u)\text{d}x\\
	&\quad\quad-\frac{1}{\epsilon}\int_{\mathbb R^3}(\tilde F'(\epsilon u_n)-\tilde F'(\epsilon u))(u_n-u)\text{d}x.
\end{split}\end{equation}
In terms of \eqref{eq8}, one has
\begin{equation}\label{eq14}
	\frac{1}{\epsilon}\int_{\mathbb R^3}\tilde F'(\epsilon u)(u_n-u)\text{d}x\rightarrow0 \ \ \text{ as }n\rightarrow \infty.
\end{equation}
Furthermore, due to \eqref{eq6}, one has
\begin{equation*}
	\begin{split}
	&\quad\frac{1}{\epsilon}\int_{\mathbb R^3}\left|\tilde F'(\epsilon u_n)(u_n-u)\right|\text{d}x\\
&	\leq\int_{\mathbb R^3}\left(\delta |u_n||u_n-u|+ \epsilon C_\delta u_n^2|u_n-u|\right)\text{d}x\\
&\leq \delta\|u_n\|_{L^2(\mathbb R^3)}\|u_n-u\|_{L^2(\mathbb R^3)}+\epsilon C_\delta\left(\int_{\mathbb R^3}|u_n|^3\text{d}x\right)^{\frac{2}{3}}\left(\int_{\mathbb R^3}|u_n-u|^3\text{d}x\right)^{\frac{1}{3}}.
\end{split}\end{equation*}
This combining with \eqref{eq14} gives that
\begin{equation}\label{eq15}
	\left|\frac{1}{\epsilon}\int_{\mathbb R^3}(\tilde F'(\epsilon u_n)-\tilde F'(\epsilon u))(u_n-u)\text{d}x\right|\leq C\delta\ \ \text{as}~n\rightarrow\infty.
\end{equation}
It follows from \cite[Lemma 1]{MR3148130} that
\begin{equation*}
	\begin{split}
&\quad\left|\int_{\mathbb R^3}(\phi_{u_n}u_n-\phi_uu)(u_n-u)\text{d}x\right|\\
&\leq\int_{\mathbb R^3}\left(\left|\phi_{u_n}u_n(u_n-u)\right|+\left|\phi_uu(u_n-u)\right|\right)\text{d}x\\
&\leq \|\phi_{u_n}\|_{L^6{(\mathbb R^3)}}\|{u_n}\|_{L^{12/5}{(\mathbb R^3)}}\|{u_n-u}\|_{L^{12/5}{(\mathbb R^3)}}+\|\phi_{u}\|_{L^6{(\mathbb R^3)}}\|{u}\|_{L^{12/5}{(\mathbb R^3)}}\|{u_n-u}\|_{L^{12/5}{(\mathbb R^3)}}.
\end{split}\end{equation*}
Note that $\phi_{u_n}\rightarrow\phi_u$ strongly in $L^6(\mathbb R^3)$ and $u_n\rightarrow u$ strongly in $L^{12/5}{(\mathbb R^3)}$. Then
\begin{equation}\label{eq16}
	\left|\int_{\mathbb R^3}(\phi_{u_n}u_n-\phi_uu)(u_n-u)\text{d}x\right| \rightarrow 0\ \ \text{ as }n\rightarrow0.
	\end{equation}
Hence, substituting \eqref{eq15} and \eqref{eq16} into \eqref{eq13} yields
\begin{equation*}
	\|u_n-u\|^{2}_{H^1_r(\mathbb R^3)}\leq o_n(1)+C\delta.
\end{equation*}
Since  $\delta$ is arbitrary, one has
\begin{equation*}
	\|u_n-u\|_{H^1_r(\mathbb R^3)}=0\ \ \text{as }n\rightarrow\infty.
\end{equation*}
This completes the lemma.
\end{proof}
By Lemma \ref{lem1}, \ref{lem2} and the famous mountain pass theorem, there is a $u_{\epsilon}\in H^1_r(\mathbb R^3)$ such that $I_{\epsilon}(u_{\epsilon})=c_{\epsilon}$ and $I_{\epsilon}'(u_{\epsilon})=0$. Therefore, $u_{\epsilon}$ is the weak solution of system \eqref{eq7}. Indeed, the mountain pass value is defined as follows
$$c_{\epsilon}=\inf\limits_{\gamma\in\Gamma}\sup\limits_{t\in[0,1]}I_{\epsilon}(\gamma(t)),$$
where
$$\Gamma=\{\gamma\in C([0,1],H^{1}_{r}(\mathbb{R}^3)):~\gamma(0)=0,\gamma(1)=\gamma_0(1)~\text{and}~I_{\epsilon}(\gamma_0(1))<0\}.$$
By Lemma \ref{lem1}-(i) we have
\begin{equation}\label{3.10}
c_{\epsilon}\ge \beta>0.
\end{equation}

Since $\rho$ and $\beta$ are positive constants independent of $\epsilon$. And the function $\zeta\in H^{1}_{r}(\mathbb{R}^3)$ in Lemma \ref{lem1}-(ii) is also independent of $\epsilon$. Then we have the following result:
\begin{lemma}
	There exist two positive constants \(m_{1},m_{2}>0\), independent of \(\epsilon\in(0,1]\), such that
	\[
	m_{1}\ \le\ I_{\epsilon}(u_{\epsilon})=c_{\epsilon}\ \le\ m_{2},
	\]
	where \(u_{\epsilon}\in H^{1}_{r}(\mathbb{R}^3)\) is the mountain pass critical point of \(I_{\epsilon}\).
\end{lemma}

\begin{proof}
	By Lemma~\ref{lem1} and the Palais–Smale condition (Lemma~\ref{lem2}), for every \(\epsilon\in(0,1]\) there exists
	\(u_{\epsilon}\in H^{1}_{r}(\mathbb{R}^3)\) with
	\[
	I_{\epsilon}'(u_{\epsilon})=0~\text{and}~I_{\epsilon}(u_{\epsilon})=c_{\epsilon}\ge \beta>0,
	\]
	where \(\beta\) is independent of \(\epsilon\). Thus, we choose \(m_{1}:=\beta\).
	
	For the upper bound, consider the comparison functional (the case \(\epsilon=1\))
\begin{equation*}
			\begin{aligned}
	\text{$I_{\epsilon}(u)$}&=\frac{1}{2}\int_{\mathbb R^3}\left(|\nabla u|^2+V(x)u^2\right)\text{d}x-\frac{1}{4}\int_{\mathbb R^3}\phi_{u} u^2\text{d}x-\frac{1}{\epsilon^2}\int_{\mathbb R^3}\tilde F(\epsilon u)\text{d}x\\
    &\leq \frac{1}{2}\int_{\mathbb R^3}\left(|\nabla u|^2+V(x)u^2\right)\text{d}x-\frac{1}{4}\int_{\mathbb R^3}\phi_{u} u^2\text{d}x+\frac{1}{\epsilon^2}\int_{\mathbb R^3}\left(\epsilon^2\delta |u|^2+\epsilon^3 C_\delta |u|^3\right)\text{d}x\\
    &\leq \frac{1}{2}\int_{\mathbb R^3}\left(|\nabla u|^2+V(x)u^2\right)\text{d}x-\frac{1}{4}\int_{\mathbb R^3}\phi_{u} u^2\text{d}x+\int_{\mathbb R^3}\left(\delta |u|^2+C_\delta |u|^3\right)\text{d}x:=J(u).
	\end{aligned}
		\end{equation*}
	Since \(\epsilon\le 1\), we have the point-wise comparison
	\begin{equation*}
		I_{\epsilon}(u)\ \le\ J(u)\quad\text{for~all}~u\in H^{1}_{r}(\mathbb{R}^3).
	\end{equation*}
	Choose \(h\in C_{0}^{\infty}(\mathbb{R}^{3})\) with \(h\neq0\).
	As in the proof of Lemma~\ref{lem1}-(ii), there exists \(s\gg1\) such that \(J(sh)<0\).
	Fix the path \(\gamma_{0}(t):=t\,sh\) on \(t\in[0,1]\).
	Then, by definition of the mountain pass level,
	\[
	c_{\epsilon}\ =\ \inf_{\gamma\in\Gamma}\;\sup_{t\in[0,1]} I_{\epsilon}(\gamma(t))
	\ \le\ \sup_{t\in[0,1]} I_{\epsilon}(\gamma_{0}(t))
\le\ \max_{t\in[0,1]} J(\gamma_{0}(t))=:m_{2}.
	\]
	The map \(t\mapsto J(\gamma_{0}(t))\) is continuous on the compact interval \([0,1]\); hence the supremum is a finite maximum, and \(m_{2}\) is independent of \(\epsilon\).
	Therefore \(I_{\epsilon}(u_{\epsilon})=c_{\epsilon}\le m_{2}\), which, together with the lower bound, proves the lemma.
\end{proof}

The following lemma enables us to remove the truncation \eqref{eq5}.
\begin{lemma}\label{lem3}
	 There exists $\epsilon'<\epsilon_{\ast}$ such that for $0<\epsilon<\epsilon'$, let $u_{\epsilon}$ be the weak solution of system \eqref{eq7} obtained in Lemma \ref{lem2}. Then there exists a positive constant $m_0$ independent of $\epsilon$ such that
	\begin{equation}
	    \label{u_epsion}
	\|u_{\epsilon}\|_{L^{\infty}(\mathbb R^3)}\leq m_0.\end{equation}
\end{lemma}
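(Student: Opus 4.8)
The plan is to prove \eqref{u_epsion} by a Moser iteration (equivalently, a Brezis--Kato bootstrap) applied to the first equation of the modified system \eqref{eq7}, exploiting that its right--hand side is \emph{subcritical}: by \eqref{eq6} the nonlinearity obeys $\frac1\epsilon|\tilde F'(\epsilon u_\epsilon)|\le \delta|u_\epsilon|+\epsilon C_\delta|u_\epsilon|^2$, while the nonlocal term is governed by $\phi_{u_\epsilon}\in L^6(\mathbb R^3)$. Throughout I would keep the potential $V$ on the left--hand side, where $V\ge V_0>0$ only helps, and move all remaining terms to the right. The key to the \emph{uniformity} in $\epsilon$ and $R$ is to isolate, once and for all, a set of $\epsilon$-independent norms that control every constant appearing in the iteration.

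First I would record a uniform $H^1$ bound. Since $I_\epsilon'(u_\epsilon)=0$, the coercivity estimate proved in Lemma \ref{lem2} gives $c_\epsilon=I_\epsilon(u_\epsilon)-\frac13\langle I_\epsilon'(u_\epsilon),u_\epsilon\rangle\ge\frac18\|u_\epsilon\|_{H^1_r(\mathbb R^3)}^2$; because the upper bound for $I_\epsilon(sh)$ obtained in Lemma \ref{lem1}$(ii)$ is independent of $\epsilon$, one may fix a single comparison path and conclude $c_\epsilon\le\bar c$ for a constant $\bar c$ independent of $\epsilon$. Hence $\|u_\epsilon\|_{H^1_r(\mathbb R^3)}\le C_0$ uniformly, so $\|u_\epsilon\|_{L^p(\mathbb R^3)}\le C_0$ for every $2\le p\le 6$, and by \eqref{phi_estimate} together with $D^{1,2}(\mathbb R^3)\hookrightarrow L^6(\mathbb R^3)$ one gets $\|\phi_{u_\epsilon}\|_{L^6(\mathbb R^3)}\le C\|u_\epsilon\|_{H^1_r(\mathbb R^3)}^2\le C$, again uniformly in $\epsilon$ (and in $R$).

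For the iteration, fix $\beta\ge1$, a truncation level $L>0$, and test $\eqref{eq7}_1$ with the admissible function $u_\epsilon\min(|u_\epsilon|,L)^{2(\beta-1)}$. As $L\to\infty$ the left--hand side produces $\frac{2\beta-1}{\beta^2}\int|\nabla(|u_\epsilon|^\beta)|^2+\int V|u_\epsilon|^{2\beta}$, which by the Sobolev inequality dominates $\frac{C}{\beta}\|u_\epsilon\|_{L^{6\beta}}^{2\beta}$. On the right, Hölder's inequality with $\phi_{u_\epsilon}\in L^6$ gives $\int\phi_{u_\epsilon}|u_\epsilon|^{2\beta}\le\|\phi_{u_\epsilon}\|_{L^6}\|u_\epsilon\|_{L^{12\beta/5}}^{2\beta}$, while \eqref{eq6} yields $\delta\|u_\epsilon\|_{L^{2\beta}}^{2\beta}+\epsilon C_\delta\|u_\epsilon\|_{L^{12\beta/5}}^{2\beta}\|u_\epsilon\|_{L^6}$ for the nonlinear part. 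Setting $A:=\|\phi_{u_\epsilon}\|_{L^6}+\epsilon C_\delta\|u_\epsilon\|_{L^6}+\delta$, a quantity uniformly bounded for $\epsilon<1$, leads to the recursion
\begin{equation*}
\|u_\epsilon\|_{L^{6\beta}}^{2\beta}\le CA\,\beta\Big(\|u_\epsilon\|_{L^{12\beta/5}}^{2\beta}+\|u_\epsilon\|_{L^{2\beta}}^{2\beta}\Big).
\end{equation*}
Since $\tfrac{12\beta}{5}<6\beta$ and $2\beta<6\beta$, each step genuinely gains integrability. Starting from $\beta_0=\tfrac52$ (where the right--hand exponents equal $6$ and $5$, both controlled by the uniform $H^1$ bound) and iterating along $\beta_{k+1}=\tfrac52\beta_k$ — so that $\tfrac{12\beta_{k+1}}{5}=6\beta_k$ and the recursion closes on the previous iterate — the product $\prod_k(CA\beta_k)^{1/(2\beta_k)}$ converges because $\sum_k\beta_k^{-1}\log\beta_k<\infty$. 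Letting $k\to\infty$ gives $\|u_\epsilon\|_{L^\infty(\mathbb R^3)}\le m_0$, where $m_0$ depends only on $A$, $V_0$ and the uniform bound $C_0$; in particular $m_0$ is independent of $\epsilon$ and $R$.

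The main obstacle is twofold. First, one must justify the formal test--function computation rigorously: this is handled by working with the truncations $\min(|u_\epsilon|,L)$, integrating, and passing to the limit $L\to\infty$ via Fatou/monotone convergence, which is routine but must precede the assertion $u_\epsilon\in L^{6\beta}$ at each stage. Second, and more essentially, one must keep every constant independent of $\epsilon$ and $R$: this is precisely why I restrict to $0<\epsilon<\epsilon'(R)$, so that the quadratic coefficient $\epsilon C_\delta$ stays small and is absorbed into the uniformly bounded factor $A$, and why the uniform $H^1$ bound — hence the uniform control of $\|\phi_{u_\epsilon}\|_{L^6}$ — is established first. The nonlocal character of $\phi_{u_\epsilon}u_\epsilon$ causes no difficulty beyond a single application of Hölder's inequality, exactly because \eqref{phi_estimate} places $\phi_{u_\epsilon}$ in $L^6$ with uniformly bounded norm.
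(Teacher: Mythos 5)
Your proposal is correct and follows essentially the same route as the paper: a uniform $H^1_r$ bound from the mountain-pass structure, the uniform $L^6$ control of $\phi_{u_\epsilon}$ via \eqref{phi_estimate}, Moser iteration with the truncated test functions $u_\epsilon|u_{\epsilon,k}|^{2(\alpha-1)}$, and the same exponent scheme $\alpha_{n+1}=\tfrac52\alpha_n$ closing $\tfrac{12}{5}\alpha_{n+1}=6\alpha_n$. The only deviation is that the paper chooses $\delta=\tfrac{V_0}{2}$ and absorbs the linear term $\delta\int u_\epsilon^2|u_{\epsilon,k}|^{2(\alpha-1)}\,\mathrm{d}x$ into $V_0\int w_k^2\,\mathrm{d}x$ on the left, whereas you keep $\|u_\epsilon\|_{L^{2\beta}}^{2\beta}$ on the right; since $L^{6\beta_k}(\mathbb R^3)\not\subset L^{5\beta_k}(\mathbb R^3)$ on a set of infinite measure, your recursion needs the one-line remark that $\|u_\epsilon\|_{L^{5\beta_k}}$ is controlled by interpolating between the uniform $L^6$ bound and the previous iterate $\|u_\epsilon\|_{L^{6\beta_k}}$ (or, more simply, adopt the paper's absorption), after which everything goes through.
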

\begin{proof}
		For any $0<\epsilon<\epsilon_*$, let $u_\epsilon$ satisfy $I_{\epsilon}(u_{\epsilon})=c_{\epsilon}$ and $I_{\epsilon}'(u_{\epsilon})=0$ where $c_{\epsilon}\leq m_2$. Using the similar method in Lemma \ref{lem2}, it implies
		\begin{equation*}
	I_{\epsilon}(u_\epsilon)=I_{\epsilon}(u_\epsilon)-\frac{1}{3}\langle I_{\epsilon}'(u_\epsilon),u_\epsilon\rangle\geq \frac{1}{8}\|u_\epsilon\|^{2}_{H^1_r(\mathbb R^3)}.
	\end{equation*}
	It is easy to say that $\{u_{\epsilon}\}$ is a bounded sequence. Therefore, there exists $M_{0}> 0$ independent of $\epsilon $ such that for all $\epsilon \in(0,\epsilon_*)$, it holds
		\begin{equation*}
			\left\| u_{\epsilon }\right\|_{H_{r}^{1}\left ( \mathbb{R}^{3} \right )}\leq M_{0}.
		\end{equation*}
	
        For any $\alpha > 1$, define
		\begin{equation*}
			\varphi =u_{\epsilon }\left|u_{\epsilon ,k} \right|^{2\left ( \alpha -1 \right )}~\text{and}~u _{\epsilon,k}=
			\begin{cases}
				u _{\epsilon },~\text{if}~\left|u _{\epsilon } \right|\leq k,\\
				k,\ ~~\text{if}~\left|u _{\epsilon } \right|> k.
			\end{cases}
		\end{equation*}
		Since $u_{\epsilon}$ is a critical point of $I_{\epsilon}\left ( u \right )$, one has $\left<I_{\epsilon}'\left (u_{\epsilon}  \right ) ,\varphi \right>=0$, i.e.,
		\begin{equation} \label{eq17}
			\int_{\mathbb{R}^{3}}\left(\nabla u_{\epsilon }\nabla \varphi+V(x)u _{\epsilon }\varphi\right) \text{d}x=\int_{\mathbb{R}^{3}}\phi _{u _{\epsilon}} u_{\epsilon} \varphi \text{d}x-\frac{1}{\epsilon }\int_{\mathbb{R}^{3}}\widetilde{F}'\left ( \epsilon u_{\epsilon} \right )\varphi \text{d}x.
		\end{equation}
		For simplicity, we  refer to $u_{\epsilon}$ as $u$, $u_{\epsilon ,k}$ as $u_{k} $. Substituting $\varphi =u\left|u_{k} \right|^{2\left ( \alpha -1 \right )}$ into the left side of \eqref{eq17} yields
		\begin{equation*}
			\begin{aligned}
				&\quad \int_{\mathbb{R}^{3}}\left|u_{k} \right|^{2\left ( \alpha -1 \right )} \left|\nabla u\right|^{2}\text{d}x+\int_{\mathbb{R}^{3}}V(x) u^{2}\left| u_{k}\right|^{2\left ( \alpha -1 \right )}\text{d}x+2\left ( \alpha -1 \right )\int_{\mathbb{R}^{3}}uu_{k} \left| u_{k}\right|^{2\alpha -4}\nabla u \nabla u_{k}\text{d}x\\
				&\geq \frac{1}{\alpha ^{2}}\int_{\mathbb{R}^{3}}\left|\nabla w_{k}\right|^{2}\text{d}x+\int_{\mathbb{R}^{3}}V(x) w_{k}^{2}\text{d}x\\
				&\geq \frac{C}{\alpha ^{2}}\left ( \int_{\mathbb{R}^{3}}w_{k}^{6}\text{d}x\right )^{\frac{1}{3}}+V_0\int_{\mathbb{R}^{3}}w_{k}^{2}\text{d}x,\\
			\end{aligned}
		\end{equation*}
		where $w _{k}=u \left|u _{k}\right|^{\alpha -1}$.
		
		Substituting $\varphi =u \left|u_{k} \right|^{2\left ( \alpha -1 \right )}$ into the right side of \eqref{eq17} yields
		\begin{equation*}
			\begin{aligned}
				&\quad\left|\int_{\mathbb{R}^{3}}\phi _{u} u^{2} \left|u_{k} \right|^{2\left ( \alpha -1 \right )}\text{d}x-\frac{1}{\epsilon }\int_{\mathbb{R}^{3}}\widetilde{F}'\left ( \epsilon u \right ) u\left| u_{k}\right|^{2\left ( \alpha -1 \right )}\text{d}x\right|\\
				&\leq \left ( \int_{\mathbb{R}^{3}}\phi _{u}^{6} \text{d}x\right )^{\frac{1}{6}}\left ( \int_{\mathbb{R}^{3}}\left|u^{2} \left|u_{k} \right|^{2\left ( \alpha -1 \right )}\right|^{\frac{6}{5}}\text{d}x\right )^{\frac{5}{6}}+\frac{1}{\epsilon }\int_{\mathbb{R}^{3}}\left|\widetilde{F}'\left (\epsilon u \right )u \right| \left| u_{k}\right|^{2\left ( \alpha -1 \right )}\text{d}x\\
				&\leq C \left\|u \right\|_{H_{r}^{1}(\mathbb{R}^3)}^{2}\left ( \int_{\mathbb{R}^{3}}\left|u^{2}\left|u_{k} \right|^{2\left ( \alpha -1 \right )} \right| ^{\frac{6}{5}}\text{d}x\right )^{\frac{5}{6}}+\frac{1}{\epsilon }\int_{\mathbb{R}^{3}}\left| \delta  \epsilon u+C_{\delta} \epsilon ^{2} u^2\right|\left|u\right|\left|u_{k} \right|^{2\left ( \alpha -1 \right )}\text{d}x\\
				&\leq C M^{2}_{0}\left ( \int_{\mathbb{R}^{3}}\left|u^{2}\left|u_{k} \right|^{2\left ( \alpha -1 \right )} \right| ^{\frac{6}{5}}\text{d}x\right )^{\frac{5}{6}}+\delta \int_{\mathbb{R}^{3}}u^{2}\left|u_{k} \right|^{2\left ( \alpha -1 \right )}\text{d}x+C_{\delta }\int_{\mathbb{R}^{3}}\epsilon \left| u\right|^{3}\left| u_{k}\right|^{2\left ( \alpha -1 \right )}\text{d}x.\\
			\end{aligned}
		\end{equation*}
		Then take $\delta=\frac{V_0}{2}$ and $\epsilon'=\min\{C_{\frac{V_0}{2}}^{-1},\epsilon_{\ast}\}$ for $0<\epsilon\leq\epsilon'$, one has
		\begin{equation*}
			\begin{aligned}
            &\quad\left|\int_{\mathbb{R}^{3}}\phi _{u} u^{2} \left|u_{k} \right|^{2\left ( \alpha -1 \right )}\text{d}x-\frac{1}{\epsilon }\int_{\mathbb{R}^{3}}\widetilde{F}'\left ( \epsilon u \right ) u \left| u_{k}\right|^{2\left ( \alpha -1 \right )}\text{d}x\right|\\				
            &\leq C_{1}\left ( \int_{\mathbb{R}^{3}}\left|w_{k} \right|^{\frac{12}{5}} \text{d}x\right )^{\frac{5}{6}}+\frac{V_0}{2}\int_{\mathbb{R}^{3}}w_{k} ^{2} \text{d}x+\epsilon C_{\frac{V_0}{2}}\int_{\mathbb{R}^{3}}w_{k}^{2} \left|u \right|\text{d}x\\
				&\leq C_{1}\left ( \int_{\mathbb{R}^{3}}\left|w_{k} \right|^{\frac{12}{5}} \text{d}x\right )^{\frac{5}{6}}+\epsilon C_{\frac{V_0}{2}}\left ( \int_{\mathbb{R}^{3}} u^{6}\text{d}x \right )^{\frac{1}{6}} \left ( \int_{\mathbb{R}^{3}}|w_{k}|^{ \frac{12}{5}}\text{d}x\right )^{\frac{5}{6}}+\frac{V_0}{2} \int_{\mathbb{R}^{3}}w_{k}^{2}\text{d}x\\
				&\leq C_{1}\left ( \int_{\mathbb{R}^{3}}\left|w_{k} \right|^{\frac{12}{5}} \text{d}x\right )^{\frac{5}{6}}+\left ( \int_{\mathbb{R}^{3}}\left|w_{k} \right| ^{\frac{12}{5}}\text{d}x\right )^{\frac{5}{6}}+\frac{V_0}{2} \int_{\mathbb{R}^{3}} w_{k}^{2}\text{d}x.\\
			\end{aligned}
		\end{equation*}
		Combining the two inequalities above yields
		\begin{equation*}
			\frac{C}{\alpha ^{2}}\left ( \int_{\mathbb{R}^{3}}w_{k}  ^{6}\text{d}x\right )^{\frac{1}{3}}\leq (C_{1}+1)\left ( \int_{\mathbb{R}^{3}}\left|w_{k} \right|^{\frac{12}{5}} \text{d}x\right )^{\frac{5}{6}}.
		\end{equation*}
		Then one gets $$\frac{1}{\alpha ^{2}} \left\|w_{k} \right\|_{L^{6}(\mathbb{R}^3)}^{2}\leq \left ( C_{1} +C_{2}\right )\left\|w_{k} \right\|_{L^{\frac{12}{5}}(\mathbb{R}^3)}^{2}.$$
		Taking $\mu ^{*}=\frac{12}{5}\in \left ( 2,6 \right )$ gives
		\begin{equation} \label{eq18}
			\left\|w_{k} \right\|_{L^{6}(\mathbb{R}^3)}\leq \alpha  C \left\| w_{k}\right\|_{L^{\mu^{\ast}}(\mathbb{R}^3)}.
		\end{equation}
		If $u^{\alpha }\in L^{\frac{12}{5}}\left ( \mathbb{R}^{3} \right )$ and let $k\to \infty$ in both side of \eqref{eq18}, one has
		\begin{equation} \label{eq19}
			\left\|u \right\|_{L^{6 \alpha}(\mathbb{R}^3) }\leq \alpha ^{\frac{1}{\alpha }} C^{\frac{1}{\alpha }} \left\|u \right\|_{L^{\mu^{\ast} \alpha}(\mathbb{R}^3) }.
		\end{equation}
		Set $\alpha _{n}=\left ( \frac{5}{2} \right )^{n}$, $n=1,2,\cdots$. Clearly, $\alpha _{1}= \frac{5}{2}>1$, $2^{*}\alpha _{n}=\mu ^{*}\alpha _{n+1}$, i.e., $6 \alpha_{n}=\frac{12}{5}\alpha _{n+1}$ and $\alpha _{n}\to \infty$. Iterating  \eqref{eq19} with the sequence $\left\{ \alpha _{n}\right\}$ gives
		\begin{equation} \label{eq20}
			\left\|u \right\|_{L^{6 \alpha _{n}}(\mathbb{R}^3)}\leq \alpha _{1}^{\sum_{i=1}^{n}\frac{i}{\alpha _{i}}}C^{\sum_{i=1}^{n}\frac{1}{\alpha _{i}}} \left\|u \right\|_{L^{6}(\mathbb{R}^3)}~\text{for~all}~n\in \mathbb{N}.
		\end{equation}
		Note that $\sum_{i=1}^{\infty }\frac{i}{\alpha _{i}}=\frac{10}{9}$ and $\sum_{i=1}^{\infty }\frac{1}{\alpha _{i}}=\frac{2}{3}$.
		Taking $n\to \infty $ on both sides of \eqref{eq20} gives
		\begin{equation*}
			\left\|u \right\|_{L^{\infty}(\mathbb{R}^3) }\leq \left ( \frac{5}{2} \right )^{\frac{10}{9}} C^{\frac{2}{3}}\left\|u \right\|_{L^{6}(\mathbb{R}^3)}\leq C' \left\|u \right\|_{H_{r}^{1}(\mathbb{R}^3)}.
		\end{equation*}
		Therefore, we get
		\begin{equation*}
			\left\| u_{\epsilon }\right\|_{L^{\infty}(\mathbb{R}^3) }\leq C' \left\|u_{\epsilon} \right\|_{H_{r}^{1}(\mathbb{R}^3)}\leq C' M_{0}.
		\end{equation*}
		Let $C'M_{0}=m_0$. Hence, $\left\|u_{\epsilon} \right\|_{L^{\infty}(\mathbb{R}^3)}\leq m_0$.
The proof is completed.
	\end{proof}
{\bf The proof of Theorem \ref{thm}.}
According to Lemmas \ref{lem1} and \ref{lem2}, for $0<\epsilon<\epsilon'$, there exists a nontrivial weak solution $u$ satisfies
\begin{equation*}
	\begin{cases}
		-\Delta u+V(x) u-\phi u=\frac{1}{\epsilon}\tilde F'(\epsilon u) &\quad\text{in }\mathbb R^3,\\
		-\Delta\phi= u^2 &\quad\text{in }\mathbb R^3.
	\end{cases}
\end{equation*}
Furthermore, take $\epsilon^{*}=\min\{\frac{1}{m_{0}^{2}},\epsilon'\}$. For $0<\epsilon<\epsilon^{*}$, it follows from \eqref{u_epsion} that
	\begin{equation*}
	\begin{split}
\tilde F'(\epsilon u)&=d'(\epsilon u/\sqrt{\epsilon}) F(\epsilon u)\frac{1}{\sqrt{\epsilon}}+d(\epsilon u/\sqrt{\epsilon})f(\epsilon u)-d'(\epsilon u/\sqrt{\epsilon})\frac{1}{\sqrt{\epsilon}}|t|^3+3(1-d(\epsilon u/\sqrt{\epsilon}))|\epsilon u|\epsilon u\\
&=f(\epsilon u),~a.e.~\text{in}~x\in\mathbb{R}^3,
	\end{split}\end{equation*}
where the fact $|\epsilon u/\sqrt{\epsilon}|\leq m_0\sqrt{\epsilon}\leq 1$ has been used.	

Hence, for $0<\epsilon<\epsilon^{*}$, $u$ is also the solution of system \eqref{eq2}. Consequently, $\epsilon u$ is the solution of system $\eqref{eq1}$ and $\lambda^*=\frac{1}{(\epsilon^*)^{2}}$. Thus Theorem \ref{thm} has been proved.

\section{ Schr\"{o}dinger-Poisson system with the sign-changing potential}

The purpose of this section is to obtain a nontrivial solutions of system (\ref{eq1}) with the sign-changing potential.

Before proving our results, we shall introduce the suitable function space where the critical points of the modified functional $I_{\epsilon,R}$ belong to. Define the working space as follows:
$$H^{1}_{V}(\mathbb{R}^3)=\{u\in H^{1}(\mathbb{R}^3):~\int_{\mathbb{R}^3}V(x)u^{2}\text{d}x<\infty \},$$
where the inner product
$$\langle u,v\rangle=\int_{\mathbb{R}^3}\left(\nabla u\cdot\nabla v+\tilde V(x)u v\right)\text{d}x$$ with $\tilde V$ defined in \eqref{tilde_V}.
Then the corresponding norm is $\|u\|_{H^{1}_{V}(\mathbb{R}^3)}=\langle u,u\rangle^{\frac{1}{2}}$.

In this case, we consider the following modified functional. For fixed $R> 1$, we define a truncation function
\begin{equation*}
	\eta _{R}\left ( t \right )=\left\{\begin{matrix}
    \begin{aligned}
        &1,~\text{if}~\left| t\right|\leq R-1,\\
    &0,~\text{if}~\left| t\right|\geq R,
    \end{aligned}
    \end{matrix}\right.
\end{equation*}
where $\eta _{R}\left ( t \right )\in C_0^{\infty}(\mathbb{R},\left [ 0,1 \right ])~~\text{and}~\left | \eta _{R}'\left ( t \right )\right | \leq 2$. For $\epsilon =\frac{1}{\sqrt{\lambda }}$, we define the modified functional $I_{\epsilon,R}\left ( u\right )$ as follows:
\begin{equation*}
    I_{\epsilon,R }\left ( u \right )=\frac{1}{2}\int_{\mathbb{R}^3}\left(|\nabla u|^{2}+V(x)u^2\right)\text{d}x-\frac{1}{4}\int_{\mathbb{R}^{3}}\phi _{u}u^{2}\text{d}x-\frac{1}{\epsilon ^{2}}\eta _{R}\left (\|u\|^{2}_{H^1_{V}(\mathbb{R}^3)}\right )\int_{\mathbb{R}^{3}}F\left (\epsilon \eta _{R} \left ( \left | u\right |\right )u\right )\text{d}x.
\end{equation*}

It is easy to check that $I_{\epsilon ,R }\in C^{1}\left ( H_{V}^{1}\left ( \mathbb{R}^{3} \right ) ,\mathbb{R}\right )$. And for any $u,\varphi \in H_{V}^{1}\left ( \mathbb{R}^{3} \right )$, we have
\begin{equation*}
    \begin{aligned}
		\left< I'_{\epsilon,R  }\left ( u \right ),\varphi \right>=& \int_{\mathbb{R}^{3}}\left ( \nabla u \nabla \varphi +V(x) u\varphi \right )\text{d}x-\int_{\mathbb{R}^{3}}\phi _{u}u\varphi \text{d}x\\
        &-\frac{2}{\epsilon^{2}}\eta _{R}'\left ( \|u\|^{2}_{H^1_{V}(\mathbb{R}^3)} \right )\int_{\mathbb{R}^{3}}(\nabla u\nabla\varphi+\tilde V(x)u\varphi) \text{d}x\int_{\mathbb{R}^{3}}F\left (\epsilon\eta _{R}\left ( \left | u\right | \right ) u\right )\text{d}x\\
        &-\frac{1}{\epsilon}\eta _{R}\left ( \|u\|^{2}_{H^1_{V}(\mathbb{R}^3)} \right )\int_{\mathbb{R}^{3}}\left(f\left (\epsilon \eta _{R}\left ( \left | u\right | \right ) u\right )\left (\eta _{R}\left ( \left | u\right | \right ) \varphi +\eta _{R}'\left ( \left | u\right |\right )|u|\varphi \right )\right)\text{d}x.
    \end{aligned}
\end{equation*}


If $(V_1)$ holds, 
the embedding $H^{1}_{V}(\mathbb{R}^3)\hookrightarrow L^{s}(\mathbb{R}^3)$ is compact for any $s\in[2,6)$ (see Bartsch and  Wang \cite{MR1349229}). 
Let $-\infty<\lambda_1\leq\lambda_2\leq\cdots\leq\lambda_{n}\leq\cdots$
be the complete sequence of eigenvalues of
\begin{equation*}
-\Delta u +V(x)u=\lambda u,\ u\in H^{1}_{V}(\mathbb{R}^3).
\end{equation*}
Each eigenvalue is repeated according to its multiplicity, and let $e_1,e_2,\cdots$ be the corresponding orthonormal eigenfunctions in $L^{2}(\mathbb{R}^N)$.

\subsection{Cerami condition}
    In order to prove Theorem \ref{thm2}, we firstly need to show that the functional $I_{\epsilon,R}$ satisfies the Cerami conditions. To this end, we rewrite $I_{\epsilon,R}$ in the following form
\begin{equation*}
        \begin{aligned}
        I_{\epsilon,R}(u)&=\frac{1}{2}\int_{\mathbb{R}^3}\left(|\nabla u|^{2}+\tilde V(x)u^{2}\right)\text{d}x-\frac{1}{4}\int_{\mathbb{R}^3}\phi_{u}u^{2}\text{d}x\\
    &\indent-\frac{1}{\epsilon ^{2}}\eta _{R}\left (\|u\|^{2}_{H^1_{V}(\mathbb{R}^3)}\right )\int_{\mathbb{R}^{3}}F\left (\epsilon \eta _{R} \left ( \left | u\right |\right )u\right )\text{d}x-\frac{m}{2}\int_{\mathbb{R}^3}u^{2}\text{d}x.
\end{aligned}
    \end{equation*}
   We also consider the following functional
\begin{equation}\label{I0}
I_{0}(u)=\frac{1}{2}\int_{\mathbb R^3}\left(|\nabla u|^2+V(x)u^2\right)\text{d}x-\frac{1}{4}\int_{\mathbb R^3}\phi_{u} u^2\text{d}x,
\end{equation}
which is the energy functional of the following Choquard equation
\begin{equation}\label{eq5.1}
			-\Delta u+V(x)u=\frac{1}{4\pi}\int_{\mathbb R^3}\frac{u^2(y)}{|x-y|}\text{d}y \ \ ~\text{in}~\mathbb{R}^3.
\end{equation}
According to ${\bf (f)}$, we can derive the following conclusion.
\begin{lemma}\label{lem3.5}
       For any $R> 1$, there exists a constant $C_{R} > 0$ independent of $\epsilon $, such that for all $\left ( \epsilon ,u \right )\in \left [ 0,1 \right ]\times H_{V}^{1}\left (  \mathbb{R}^{3}\right )$, one has
      \begin{equation*}
      \left | I_{\epsilon,R }\left ( u \right )-I_{0}\left ( u \right )\right |\leq \frac{1}{R}+C_{R}\epsilon ^{p-2}
      \end{equation*}
      and
      \begin{equation*}
      \left | \langle I_{\epsilon,R }'\left ( u \right ),u\rangle-\langle I_{0}'\left ( u \right ),u\rangle\right |\leq \frac{1}{R}+C_{R}\epsilon ^{p-2}.
        \end{equation*}
\end{lemma}
\begin{proof}
   By ${\bf (f)}$, for any $ \delta >0$ and $R>1$, there exists a constant $C_{\delta,R}>0$ such that $|F\left (t \right )|\leq \delta  t^{2}+C_{\delta,R}\left | t\right |^{p}$ and $\left | f\left ( t \right )\right |\leq \delta \left | t\right |+C_{\delta ,R}\left | t\right |^{p-1}$ for $\left | t\right |\leq R$ and some $p\in(2,6)$. Then for any $u\in  H_{V}^{1}\left ( \mathbb{R}^{3} \right )$, consider
    \begin{equation*}
        \begin{aligned}
            \left | I_{\epsilon,R }\left ( u \right )-I_{0}\left ( u \right )\right |&=\left |\frac{1}{\epsilon ^{2}} \eta _{R}\left ( \|u\|^{2}_{H^1_{V}(\mathbb{R}^3)}\right ) \int_{\mathbb{R}^{3}}F\left (\epsilon\eta _{R} \left ( \left | u\right | \right )u\right )\text{d}x\right |\\
            &\leq \frac{1}{\epsilon^{2}} \eta _{R}\left (\|u\|^{2}_{H^1_{V}(\mathbb{R}^3)} \right )
            \int_{\mathbb{R}^{3}}\left ( \delta \epsilon ^{2} \eta _{R}^{2}\left ( \left | u\right | \right )u^{2}+C_{\delta,R}\epsilon^{p}\eta _{R}^{p}\left ( \left | u\right | \right )\left | u\right |^{p}\right )\text{d}x\\
            & \leq\delta \eta _{R}\left ( \|u\|^{2}_{H^1_{V}(\mathbb{R}^3)}\right )\int_{\mathbb{R}^{3}}u^{2}\text{d}x +C_{\delta,R}\epsilon^{p-2}\eta _{R}\left ( \|u\|^{2}_{H^1_{V}(\mathbb{R}^3)}\right )\int_{\mathbb{R}^{3}}\left | u\right |^{p}\text{d}x \\
            &\leq \delta R+C_{\delta,R}\epsilon^{p-2}.
        \end{aligned}
    \end{equation*}
    Let $\delta = \frac{1}{R^{2}}$, one has
    \begin{equation*}
         \left | I_{\epsilon ,R }\left ( u \right )-I_{0}\left ( u \right )\right |\leq \frac{1}{R}+C_{R}\epsilon ^{p-2}.
    \end{equation*}
For the second estimate in the lemma, we can obtain it by the same method. Thus the proof is completed.
\end{proof}

\begin{lemma}\label{lem4}
Let $\{u_n\}_{n=1}^\infty\subset H^1_V(\mathbb R^3)$ be a Cerami sequence for the functional $I_{\epsilon,R}$. For any $R>1$ fixed, there exists a constant $\epsilon(R)>0$ such that if $\epsilon\in(0,\epsilon(R))$, then $u_n\rightarrow u\text{ in }H^1_V(\mathbb R^3)$ with $u\in H^1_V(\mathbb R^3)$.
\end{lemma}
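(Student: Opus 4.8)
The plan is to follow the two-stage scheme of Lemma \ref{lem2}: first establish that the Cerami sequence $\{u_n\}$ is bounded in $H^1_V(\mathbb{R}^3)$, then upgrade the resulting weak convergence to strong convergence. The decisive difference from Lemma \ref{lem2} is the indefiniteness carried by the extra term $-\frac{m}{2}\int u^2$ in \eqref{I_epsilon_2}, so the boundedness step is where the real work lies.

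For boundedness I would compute the combination $I_\epsilon(u_n)-\frac13\langle I_\epsilon'(u_n),u_n\rangle$ exactly as in \eqref{eq9}, now carrying the additional contribution $-m(\frac12-\frac13)\int u_n^2=-\frac{m}{6}\int u_n^2$. Bounding $\tilde F$ and $\tilde F'$ by \eqref{eq6} and invoking the Poisson inequality \eqref{eq11} to produce a positive multiple of $\int|u_n|^3$ (minus a small multiple of $\int|\nabla u_n|^2$ that is absorbed by the gradient term), the quadratic-in-$u$ contributions collect into $\frac16\int\tilde V u_n^2-\frac{m}{6}\int u_n^2=\frac16\int V u_n^2$, because the $m$'s coming from $\tilde V=V+m$ cancel exactly. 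This is the crucial observation: after cancellation only the original, possibly sign-changing, potential integral $\frac16\int V u_n^2$ survives, and the difficulty of the indefinite operator is isolated in this single term.

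The remaining task is to dominate the negative part of $\int V u_n^2$ together with the small term $-C\delta\int u_n^2$. Here I would use assumption ${\bf(V_1)}$: the set $A=\{V<1\}$ has finite measure, while $V^-=\max(-V,0)$ is bounded and supported in $A$, so by H\"older $\int_A u_n^2\leq|A|^{1/3}(\int|u_n|^3)^{2/3}$ and likewise $\int V^- u_n^2\leq C(\int|u_n|^3)^{2/3}$, whereas on $\{V\geq1\}$ one has $u_n^2\leq V^+u_n^2$. Young's inequality then absorbs each factor $(\int|u_n|^3)^{2/3}$ into one half of the available positive term $\int|u_n|^3$, up to an additive constant, provided $\epsilon$ is small enough that the net coefficient of $\int|u_n|^3$ remains positive; this is precisely what fixes $\epsilon_{**}$. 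One is left with an estimate of the form $c+o_n(1)\|u_n\|\geq c_1\int|\nabla u_n|^2+c_2\int|u_n|^3+\frac16\int V^+u_n^2-C'$, from which $\int|\nabla u_n|^2$, $\int|u_n|^3$ and $\int V^+u_n^2$ are bounded; since $\int\tilde V u_n^2=\int V u_n^2+m\int u_n^2$ is then controlled via the same splitting, we conclude $\|u_n\|_{H^1_V(\mathbb{R}^3)}\leq C$.

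With boundedness in hand, extract $u_n\rightharpoonup u$ in $H^1_V(\mathbb{R}^3)$ and, by the compact embedding $H^1_V(\mathbb{R}^3)\hookrightarrow L^s(\mathbb{R}^3)$ for $s\in[2,6)$, $u_n\to u$ strongly in $L^s$. The strong convergence then follows as in \eqref{eq13}--\eqref{eq16}: expanding $\langle I_\epsilon'(u_n)-I_\epsilon'(u),u_n-u\rangle$, the Poisson difference tends to zero by \cite[Lemma 1]{MR3148130}, the truncated nonlinearity contributes at most $C\delta$ by \eqref{eq6}, and the new quadratic difference $m\int(u_n-u)^2$ vanishes by the compact $L^2$ embedding; since the left-hand side tends to zero and $\delta$ is arbitrary, $\|u_n-u\|_{H^1_V(\mathbb{R}^3)}\to0$. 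I expect the boundedness step to be the main obstacle, and within it the single most important point is the cancellation of the $m$-dependent terms, which reduces the coercivity question for the indefinite functional to controlling $\int V u_n^2$ through ${\bf(V_1)}$ and the cubic term generated by the Poisson estimate.
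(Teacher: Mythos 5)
Your proof is correct, and your second step (upgrading weak to strong convergence) coincides with the paper's, which at that point simply invokes the argument of Lemma \ref{lem2} together with the compact embedding $H^{1}_{V}(\mathbb{R}^3)\hookrightarrow L^{s}(\mathbb{R}^3)$, $s\in[2,6)$; your handling of the extra term $m\int_{\mathbb{R}^3}(u_n-u)^2\,\text{d}x$ via the compact $L^2$ embedding is exactly what is needed there. The boundedness step, however, is genuinely different from the paper's. The paper argues by contradiction: assuming $\rho_n=\|u_n\|_{H^{1}_{V}(\mathbb{R}^3)}\to\infty$, it sets $v_n=u_n/\rho_n$, derives from the same combination $I_{\epsilon}(u_n)-\frac13\langle I_{\epsilon}'(u_n),u_n\rangle$ the bound $c+o_n(1)\geq\frac18\rho_n^2-\frac{m}{6}\|u_n\|_{L^2(\mathbb{R}^3)}^2$, so that the $L^2$-limit $v$ of $v_n$ is nonzero, and then uses $\tilde F\geq0$ --- which is where ${\bf (f')}$ and the restriction $\epsilon<\delta_0^2/4$ enter --- together with Fatou's lemma and the blow-up of the nonlocal term $\phi_{u_n}$ on $\{v\neq0\}$ to conclude $I_{\epsilon}(u_n)\to-\infty$, contradicting $I_{\epsilon}(u_n)\to c$. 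You instead cancel the two $m$-terms exactly and absorb the indefinite part $\int V^{-}u_n^2$ (and the small term $\delta\int u_n^2$) into the cubic term $\int|u_n|^3$ produced by \eqref{eq11}, via H\"older on the finite-measure set $\{V<1\}$ guaranteed by ${\bf (V_1)}$ and Young's inequality. Both routes are valid. Yours is direct and quantitative, exploits ${\bf (V_1)}$ through the finite measure of sublevel sets rather than only through the compact embedding, and --- notably --- never uses ${\bf (f')}$ in this lemma (only the growth bounds \eqref{eq6}), so your $\epsilon_{**}$ is independent of $\delta_0$; the paper's route instead runs on the blow-up mechanism of the Poisson term, which is the observation the authors emphasize throughout the paper. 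Two cosmetic points: fix $\delta$ small (say $\delta\leq\frac{1}{16}$) \emph{before} choosing $\epsilon_{**}$, since $C_\delta$ depends on $\delta$, and the coefficient of $\int V^{+}u_n^2$ in your final estimate should read $\frac16-O(\delta)$ rather than $\frac16$.
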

\begin{proof}
    Let $\{u_n\}$ be a Cerami sequence of $I_{\epsilon,R}$, that is, for some $c\in \mathbb{R}$,
    $$I_{\epsilon,R}(u_n)\to c~\text{and}~(1+\|u_n\|_{H^{1}_{V}(\mathbb R^3)})I_{\epsilon,R}'(u_n)\to0\ ~\text{as}~n\to\infty.$$

Firstly, we claim that
$$\rho_n:=\|u_n\|_{H^{1}_{V}(\mathbb R^3)}<\infty.$$
If it is not true, we may assume $\rho_n\to\infty$. Let $v_n=\frac{u_n}{\rho_n}$. Consequently, $\{v_n\}$ is bounded in $H^{1}_{V}(\mathbb{R}^3)$. Up to a subsequence, $H^{1}_{V}(\mathbb{R}^{3})$ is compactly embedded in $L^2(\mathbb{R}^3)$, we may assume that
\begin{equation*}
		\begin{split}
	v_n&\to v~\text{weakly in}~H^{1}_{V}(\mathbb{R}^3);\\
	v_n&\to v~\text{strongly in}~L^{2}(\mathbb{R}^3);\\
	v_n&\to v~\text{a.e. in}~ \mathbb{R}^3.
	\end{split}\end{equation*}
Consider
\begin{equation*}
		\begin{split}
		c+o_{n}(1)&=I_{\epsilon,R}(u_n)-\frac{1}{3}\langle I_{\epsilon,R}'(u_n),u_n\rangle\\
		&=\left(\frac{1}{2}-\frac{1}{3}\right)\int_{\mathbb R^3}\left(|\nabla u_n|^2+\tilde V(x)u_n^2\right)\text{d}x+\left(\frac{1}{3}-\frac{1}{4}\right)\int_{\mathbb R^3}\phi_{u_n}u_n^2\text{d}x\\
        &\quad\quad-\frac{1}{\epsilon^2}\eta _{R}\left (\|u_n\|^{2}_{H^1_{V}(\mathbb{R}^3)}\right )\int_{\mathbb{R}^{3}}F\left (\epsilon \eta _{R} \left ( \left | u_n\right |\right )u_n\right )\text{d}x-\left(\frac{1}{2}-\frac{1}{3}\right)\int_{\mathbb{R}^3}m u^{2}_n\text{d}x\\
		&\quad\quad-\frac{2}{3\epsilon^{2}}\eta _{R}'\left ( \|u_n\|^{2}_{H^1_{V}(\mathbb{R}^3)} \right )\int_{\mathbb{R}^{3}}(|\nabla u_n|^2+\tilde V(x)u_n^2) \text{d}x\int_{\mathbb{R}^{3}}F\left (\epsilon\eta _{R}\left ( \left | u_n\right | \right ) u_n\right )\text{d}x\\
        &\quad\quad-\frac{1}{3\epsilon}\eta _{R}\left ( \|u_n\|^{2}_{H^1_{V}(\mathbb{R}^3)} \right )\int_{\mathbb{R}^{3}}\left(f\left (\epsilon \eta _{R}\left ( \left | u_n\right | \right ) u_n\right )\left (\eta _{R}\left ( \left | u_n\right | \right ) u_n +\eta _{R}'\left ( \left | u_n\right |\right )u_n^2 \right )\right)\text{d}x\\
		&\geq\left(\frac{1}{2}-\frac{1}{3}\right)\int_{\mathbb R^3}\left(|\nabla u_n|^2+\tilde V(x)u_n^2\right)\text{d}x+\left(\frac{1}{3}-\frac{1}{4}\right)\int_{\mathbb R^3}\phi_{u_n}u_n^2\text{d}x\\
		&\quad\quad-\frac{4}{3}\left(\frac{1}{R}+C_{R}\epsilon ^{p-2}\right)-\left(\frac{1}{2}-\frac{1}{3}\right)\int_{\mathbb{R}^3}m u^{2}_n\text{d}x\\
		&=\frac{1}{6}\int_{\mathbb R^3}\left(|\nabla u_n|^2+\tilde V(x)u_n^2\right)\text{d}x-\frac{1}{6}\int_{\mathbb{R}^3}m u^{2}_n\text{d}x-\frac{4}{3}\left(\frac{1}{R}+C_{R}\epsilon ^{p-2}\right).
	\end{split}\end{equation*}
Then, there exists a constant $\epsilon'(R)>0$ small enough such that when   $0<\epsilon\leq\epsilon'(R)$, one has
$$\frac{1}{R}+C_{R}\epsilon ^{p-2}\leq1 $$
and then
\begin{equation*}
	c+o_{n}(1)\geq \frac{1}{6}\|u_n\|^{2}_{H^{1}_{V}(\mathbb R^3)}- \frac{1}{6}\int_{\mathbb{R}^3}m u^{2}_n\text{d}x.
	\end{equation*}
Multiplying both sides by $\rho_{n}^{-2}$, for large $n$, we have 	
	$$\|v_n\|_{L^{2}(\mathbb{R}^3)}^{2}\geq \frac{1}{2m}.$$
Note that $v_n\to v$ strongly in $L^{2}(\mathbb{R}^3)$. Then $v\neq0$. Thus the Lebesgue measure of the set
$$\mathcal{M}=\{x\in\mathbb{R}^3:v(x)\neq 0\}$$
is positive. For $x\in\mathcal{M}$, as $n\to\infty$, we deduce
$$v_n(x)\to v(x)\neq 0,~u_{n}(x)=v_{n}(x)\rho_n\to\infty~\text{and}~\phi_{u_n}(x)\to\infty.$$

On the other hand, it follows from assumptions $({\bf f})$ and $({\bf f'})$ that for $\epsilon\in (0,\frac{\delta_0}{R})$,
\begin{equation}\label{eq21}
   F\left (\epsilon \eta _{R} \left ( \left | t\right |\right )t\right )\geq 0\ ~\text{for all}~t\in\mathbb{R}.
\end{equation}
From \eqref{eq21} and the Fatou lemma, for $\epsilon\in (0,\frac{\delta_0}{R})$ we have
\begin{equation*}
	\begin{split}
&\indent\int_{\mathbb{R}^3}\left(\frac{\phi_{u_n}u^{2}_{n}}{4}+\frac{1}{\epsilon^2}\eta _{R}\left (\|u_n\|^{2}_{H^1_{V}(\mathbb{R}^3)}\right )F\left (\epsilon \eta _{R} \left ( \left | u_n\right |\right )u_n\right )+\frac{1}{2}mu_{n}^{2}\right)\rho^{-2}_n\text{d}x\\
&\geq\int_{\mathbb{R}^3}\frac{\phi_{u_n}u^{2}_{n}}{4u_{n}^{2}}v^{2}_{n}\text{d}x\\
&\geq \frac{1}{4}\int_{\mathcal{M}}\phi_{u_n}v^{2}_n\text{d}x\to\infty,~~\text{as}~n\to\infty.
\end{split}
\end{equation*}
Thus
\begin{equation*}
		\begin{split}
		c-1&\leq I_{\epsilon,R}(u_n)\\
		&=\frac{1}{2}\int_{\mathbb{R}^3}\left(|\nabla u_n|^{2}+\tilde V(x)u_n^{2}\right)\text{d}x-\frac{1}{4}\int_{\mathbb{R}^3}\phi_{u_n}u_n^{2}\text{d}x\\
		&\indent-\frac{1}{\epsilon^{2}}\eta _{R}\left (\|u_n\|^{2}_{H^1_{V}(\mathbb{R}^3)}\right )\int_{\mathbb{R}^3}F\left (\epsilon \eta _{R} \left ( \left | u_n\right |\right )u_n\right )\text{d}x-\frac{m}{2}\int_{\mathbb{R}^3}u_n^{2}\text{d}x\\
		&=\rho_{n}^{2}\left(\frac{1}{2}-\int_{\mathbb{R}^3}\left(\frac{\phi_{u_n}u^{2}_{n}}{4}+\frac{1}{\epsilon^2}\eta _{R}\left (\|u_n\|^{2}_{H^1_{V}(\mathbb{R}^3)}\right )F\left (\epsilon \eta _{R} \left ( \left | u_n\right |\right )u_n\right )+\frac{1}{2}mu_{n}^{2}\right)\rho^{-2}_n\text{d}x\right)\\
        &\to-\infty,~~\text{as}~n\to\infty.
	\end{split}
    \end{equation*}
This is impossible. Therefore, our claim is true.

From the boundedness of $\{u_n\}$ and $H^{1}_{V}(\mathbb{R}^{3})$ is compactly embedded in $L^q(\mathbb{R}^3)(q\in[2,6))$, up to a sub-sequence we may assume
\begin{equation*}
		\begin{split}
	u_n&\to u_0~\text{weakly in}~H^{1}_{V}(\mathbb{R}^3);\\
	u_n&\to u_0~\text{strongly in}~L^{q}(\mathbb{R}^3)(q\in[2,6));\\
	u_n&\to u_0~\text{a.e. in}~ \mathbb{R}^3.
	\end{split}\end{equation*}
Next we prove the strong convergence of $u_{n}$ in $H_{V}^{1}\left ( \mathbb{R}^{3} \right ) $. To simplify the notation, let $$ \Psi_{\epsilon,R}\left ( u \right ):=\frac{1}{\epsilon ^{2}}\eta _{R}\left (\|u\|_{H^1_V(\mathbb{R}^3)}^{2}\right )\int_{\mathbb{R}^{3}}F\left (\epsilon \eta _{R} \left ( \left | u\right |\right )u\right )\text{d}x.$$

{\bf Claim}:
\begin{equation*}
    \begin{aligned}
     &\langle I'_{\epsilon,R}(u_n) - I'_{\epsilon,R}(u_0), u_n - u_0 \rangle \\
&= \|u_n - u_0\|_{H^1_V(\mathbb{R}^3)}^2-\int_{\mathbb{R}^3} (\phi_{u_n}u_n - \phi_{u_0}u_0)(u_n - u_0) \text{d}x -m\int_{\mathbb{R}^3}(u_n-u_0)^2\text{d}x\\
&\quad- \langle \Psi'_{\epsilon,R}(u_n) - \Psi'_{\epsilon,R}(u_0), u_n - u_0 \rangle\\
&\geq \frac{1}{2}\|u_n-u_0\|_{H^{1}_V(\mathbb{R}^3)}^{2}+o_n(1).
    \end{aligned}
\end{equation*}
Indeed, by the boundedness of ${u_n}$ in $H^{1}_{V}(\mathbb{R}^3)$, we have
\begin{equation*}
    \left |\int_{\mathbb{R}^{3}}\left ( \phi _{u_{n}} u_{n}-\phi _{u_{0}}u_{0}\right )\left ( u_{n}-u_{0} \right ) \text{d}x\right |\leq o_{n}\left ( 1 \right )\left\| u_{n}-u_{0}\right\|_{H^{1}_{V}(\mathbb{R}^3)}=o_{n}\left ( 1 \right )
\end{equation*}
and
\begin{equation*}
    \int_{\mathbb{R}^3}(u_n-u_0)^2\text{d}x=o_{n}(1).
\end{equation*}
Thus, we only need to prove that
\begin{equation*}
    \langle \Psi_{\epsilon,R}'(u_n) - \Psi_{\epsilon,R}'(u_0), u_n - u_0 \rangle \leq\frac{1}{2}\|u_n-u_0\|_{H^{1}_V(\mathbb{R}^3)}^{2}+o_n(1).
\end{equation*}
A direct computation shows that
\begin{align*}
\langle \Psi'_{\epsilon,R}(u), w \rangle =
& \frac{2}{\epsilon^2} \eta_R'(\|u\|_{H^1_{V}(\mathbb{R}^3)}^2) \int_{\mathbb{R}^3}(\nabla u\nabla w+\tilde V(x)uw) \text{d}x \int_{\mathbb{R}^3} F(\epsilon \eta_R(|u|) u)  \text{d}x \\
&+ \frac{1}{\epsilon} \eta_R(\|u\|_{H^1_{V}(\mathbb{R}^3)}^2) \int_{\mathbb{R}^3} f(\epsilon \eta_R(|u|) u) \left(\eta_R(|u|) + \eta_R'(|u|)|u| \right) w \text{d}x.
\end{align*}
And then
\begin{align}
      &\left<  \Psi'_{\epsilon,R} \left ( u_{n} \right )-\Psi_{\epsilon ,R}'\left ( u_{0} \right ),u_{n}-u_{0}\right>\nonumber \\
      &= \frac{2}{\epsilon^2} \eta_R'(\|u_n\|_{H^1_{V}(\mathbb{R}^3)}^2) \int_{\mathbb{R}^3}(\nabla u_n\nabla (u_{n}-u_{0})+\tilde V(x)u_n(u_{n}-u_{0})) \text{d}x \int_{\mathbb{R}^3} F(\epsilon \eta_R(|u_n|) u_n) \text{d}x \tag{T1}\\
     &\quad~- \frac{2}{\epsilon^2} \eta_R'(\|u_0\|_{H^1_{V}(\mathbb{R}^3)}^2) \int_{\mathbb{R}^3}(\nabla u_0\nabla (u_{n}-u_{0})+\tilde V(x)u_0(u_{n}-u_{0})) \text{d}x \int_{\mathbb{R}^3} F(\epsilon \eta_R(|u_0|) u_0)  \text{d}x \tag{T2} \\
    &\quad~+ \frac{1}{\epsilon} \eta_R(\|u_n\|_{H^1_{V}(\mathbb{R}^3)}^2) \int_{\mathbb{R}^3} f(\epsilon\eta_R(|u_n|) u_n) \left(\eta_R(|u_n|) + \eta_R'(|u_n|)|u_n| \right) (u_n-u_0) \text{d}x  \tag{T3}\\
    &\quad~- \frac{1}{\epsilon} \eta_R(\|u_0\|_{H^1_{V}(\mathbb{R}^3)}^2) \int_{\mathbb{R}^3} f(\epsilon \eta_R(|u_0|) u_0) \left(\eta_R(|u_0|) + \eta_R'(|u_0|)|u_0| \right) (u_n-u_0) \text{d}x. \tag{T4}
\end{align}
We now proceed to provide term-by-term estimates for $(\text{T}1)$ through $(\text{T}4)$. We begin by estimating $(\text{T}1)$, for which we obtain the following estimate:
\begin{equation*}
    \begin{aligned}
       \left | (\text{T}1)\right |&=\frac{2}{\epsilon^2} \eta_R'(\|u_n\|_{H^1_{V}(\mathbb{R}^3)}^2) \int_{\mathbb{R}^3}(\nabla u_n\nabla (u_{n}-u_{0})+\tilde V(x)u_n(u_{n}-u_{0})) \text{d}x \int_{\mathbb{R}^3} F(\epsilon \eta_R(|u_n|) u_n) \text{d}x\\
       &=\frac{2}{\epsilon^2} \eta_R'(\|u_n\|_{H^1_{V}(\mathbb{R}^3)}^2) \int_{\mathbb{R}^3}|\nabla (u_n-u_0)|^2+\tilde V(x)(u_{n}-u_{0})^2) \text{d}x \int_{\mathbb{R}^3} F(\epsilon \eta_R(|u_n|) u_n) \text{d}x\\
       &\quad+\frac{2}{\epsilon^2} \eta_R'(\|u_n\|_{H^1_{V}(\mathbb{R}^3)}^2) \int_{\mathbb{R}^3}(\nabla u_0\nabla (u_{n}-u_{0})+\tilde V(x)u_0(u_{n}-u_{0})) \text{d}x \int_{\mathbb{R}^3} F(\epsilon \eta_R(|u_n|) u_n) \text{d}x\\
       &\leq  \frac{2}{\epsilon^2} \eta_R'(\|u_n\|_{H^1_{V}(\mathbb{R}^3)}^2)\|u_n-u_0\|^2_{H^{1}_{V}(\mathbb{R}^3)}\int_{\mathbb{R}^3} F(\epsilon \eta_R(|u_n|) u_n)  \text{d}x\\
       &\quad+o_n(1)\frac{2}{\epsilon^2} \eta_R'(\|u_n\|_{H^1_{V}(\mathbb{R}^3)}^2)\int_{\mathbb{R}^3} F(\epsilon \eta_R(|u_n|) u_n) \text{d}x\\
      & \leq \frac{2}{\epsilon^2} \eta_R'(\|u_n\|_{H^1_{V}(\mathbb{R}^3)}^2) \|u_n-u_0\|^2_{H^{1}_{V}(\mathbb{R}^3)}\int_{\mathbb{R}^{3}}\left ( \delta \epsilon ^{2} \eta _{R}^{2}\left ( \left | u_n\right | \right )u_n^{2}+C_{\delta,R}\epsilon ^{p}\eta _{R}^{p}\left ( \left | u_n\right | \right )\left | u_n\right |^{p}\right )\text{d}x\\
      &\quad+o_{n}(1)\frac{2}{\epsilon^2} \eta_R'(\|u_n\|_{H^1_{V}(\mathbb{R}^3)}^2)\int_{\mathbb{R}^{3}}\left ( \delta \epsilon ^{2} \eta _{R}^{2}\left ( \left | u_n\right | \right )u_n^{2}+C_{\delta,R}\epsilon ^{p}\eta _{R}^{p}\left ( \left | u_n\right | \right )\left | u_n\right |^{p}\right )\text{d}x\\
      &\leq\delta \eta_R'(\|u_n\|_{H^1_{V}(\mathbb{R}^3)}^2) \|u_n-u_0\|^2_{H^{1}_{V}(\mathbb{R}^3)}\int_{\mathbb{R}^{3}}u_n^{2}\text{d}x \\
      &\quad+C_{\delta,R}\epsilon ^{p-2}\eta_R'(\|u_n\|_{H^1_{V}(\mathbb{R}^3)}^2)\|u_n-u_0\|^2_{H^{1}_{V}(\mathbb{R}^3)}\int_{\mathbb{R}^{3}}\left | u_n\right |^{p}\text{d}x\\
      &\quad+(\delta \eta_R'(\|u_n\|_{H^1_{V}(\mathbb{R}^3)}^2)\int_{\mathbb{R}^{3}}u_n^{2}\text{d}x+C_{\delta,R}\epsilon ^{p-2}\eta_R'(\|u_n\|_{H^1_{V}(\mathbb{R}^3)}^2)\int_{\mathbb{R}^{3}}\left | u_n\right |^{p}\text{d}x)o_{n}(1) \\
    &\leq \delta R^2\|u_n-u_0\|_{H^{1}_{V}(\mathbb{R}^3)}^{2}+C_{\delta,R}\epsilon ^{p-2}\|u_n-u_0\|_{H^{1}_{V}(\mathbb{R}^3)}^{2}+(\delta R^2+C_{\delta,R}\epsilon ^{p-2})o_{n}(1),
    \end{aligned}
\end{equation*}
where we use the fact that $u_n\to u_0$ weakly in $H^1_{V}(\mathbb{R}^3)$. Take $\epsilon(R)=\min\{\epsilon'(R),\frac{\delta_0}{R},\left(\frac{1}{4C_{\delta,R}}\right)^{1/(p-2)}\}$ with $\delta=\frac{1}{4R^2}$, for any $\epsilon\in(0,\epsilon(R))$ we can deduce that $$|(\text{T}1)|\leq \frac{1}{2}\|u_n-u_0\|_{H^{1}_{V}(\mathbb{R}^3)}^{2}+o_{n}(1).$$
Next, by using $u_{n} \to u _{0}~ \text{weakly~in}~H_{V}^{1}\left ( \mathbb{R}^{3}\right )$ and the similar method in Lemma \ref{lem3.5}, we obtain that
$$|(\text{T}2)|=o_{n}\left ( 1 \right ).$$

We now proceed to $(\text{T}3)$ and $(\text{T}4)$. Without loss of generality, we assume that $\epsilon(R)\in(0,1)$, for any $\epsilon\in(0,\epsilon(R))$, one has
\begin{equation*}
    \begin{aligned}
         \left |(\text{T}3)\right |&\leq   \frac{C_{R}}{\epsilon} \eta_R(\|u_n\|_{H^1_V(\mathbb{R}^3)}^2) \left|\int_{\mathbb{R}^3} f(\epsilon \eta_R(|u_n|) u_n) (u_n - u_0) \text{d}x\right|\\
    &\leq   C_{R}\eta_R(\|u_n\|_{H^1_V(\mathbb{R}^3)}^2) \int_{\mathbb{R}^3} \left [ \delta \eta _{R}\left ( \left | u_{n}\right | \right )|u_{n}|+C_{\delta,R } \epsilon^{p-2}\eta _{R}^{p-1}\left ( \left | u_{n}\right |\right )|u_{n}| ^{p-1}\right ]|u_n - u_0|\text{d}x\\
    &\leq\delta C_{R}\eta_R(\|u_n\|_{H^1_V(\mathbb{R}^3)}^2)   \int_{\mathbb{R}^3}|u_{n}||u_{n}-u_{0} |\text{d}x\\
    &\quad+ C_{\delta ,R}\epsilon^{p-2}\eta_R(\|u_n\|_{H^1_V(\mathbb{R}^3)}^2) \int_{\mathbb{R}^3}|u_{n}|^{p-1}|u_{n}-u_{0}|\text{d}x.\\
    &\leq \delta C_{R}\eta_R(\|u_n\|_{H^1_V(\mathbb{R}^3)}^2) \|u_n\|_{L^{2}(\mathbb{R}^3)}\|u_n-u_0\|_{L^{2}(\mathbb{R}^3)}\\
    &\quad+C_{\delta ,R}\epsilon^{p-2}\eta_R(\|u_n\|_{H^1_V(\mathbb{R}^3)}^2) \left ( \int_{\mathbb{R}^3}|u_{n}|^{p} \text{d}x\right )^{\frac{p-1}{p}}\left ( \int_{\mathbb{R}^3}\left | u_{n}-u_{0}\right |^{p}\text{d}x \right )^{\frac{1}{p}}\\
    &\leq \delta C_{R}\|u_n-u_0\|_{L^{2}(\mathbb{R}^3)}+C_{\delta ,R}\|u_n-u_0\|_{L^{p}(\mathbb{R}^3)}.
    \end{aligned}
\end{equation*}
Then it follows from $u_n\to u_0$ strongly in $L^{q}(\mathbb{R}^3)$ with $q\in[2,6)$ that $|(\text{T}3)|=o_{n}\left ( 1 \right ).$ Finally, it is easy to see that $|(\text{T}4)|=o_{n}\left ( 1 \right ).$

Combining these results, we have proved the above claim, and then one has
\begin{equation*}
    \|u_n - u_0\|_{H^1_V(\mathbb{R}^3)}^2=o_{n}\left ( 1 \right ),
\end{equation*}
which implies $u_{n} \to u _{0}$ strongly in $H_{V}^{1}\left ( \mathbb{R}^{3} \right )$.
\end{proof}
By Morse theory, we know that if $I$ satisfies the Cerami conditions, $u={\bf 0}$ is an isolated critical point of $I$ and the critical points set $\mathcal{K}:=\{u\in H^{1}_{V}(\mathbb{R}^3):I'(u)=0\}=\{\bf 0\}$, then $C_{q}(I,\infty)\cong C_{q}(I,0)$ for all $q\in \mathbb{N}$. If $C_{q}(I,\infty)\ncong C_{q}(I,0)$ for some $q\in \mathbb{N}$ then $I$ must have a nontrivial critical point. Therefore, one has to compute these groups to get the nontrivial critical point.
\begin{proposition}\label{pro1}(see  Bartsch and  Li \cite{MR1420790})
Suppose that $I\in C^{1}(H^{1}_{V}(\mathbb{R}^3),\mathbb{R})$ satisfies the Cerami condition and $C_{k}(I,0)\neq C_{k}(I,\infty)$ for some $k\in\mathbb{N}$, then $I$ has a nontrivial critical point.
\end{proposition}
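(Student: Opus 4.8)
The plan is to argue by contradiction through Morse theory. Suppose that $I$ has no nontrivial critical point, so that the critical set satisfies either $\mathcal{K}=\{\mathbf 0\}$ or $\mathcal{K}=\emptyset$; I will show that in either case one is forced to conclude $C_q(I,0)\cong C_q(I,\infty)$ for every $q\in\mathbb{N}$, contradicting the hypothesis $C_k(I,0)\neq C_k(I,\infty)$. The whole argument rests on the validity of the deformation lemmas, and the essential point is that the Cerami condition (rather than Palais--Smale) suffices to guarantee them, in the spirit of Bartolo--Benci--Fortunato. This is precisely why the Cerami verification carried out in Lemma \ref{lem4} is the relevant compactness input.

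Recall the objects involved. Writing $I^a=\{u\in H^1_V(\mathbb{R}^3):I(u)\le a\}$, the critical groups at infinity are $C_q(I,\infty):=H_q\big(H^1_V(\mathbb{R}^3),I^a\big)$ for any regular value $a$ below all critical values, while the critical groups at the origin are $C_q(I,0):=H_q\big(I^c\cap U,(I^c\setminus\{\mathbf 0\})\cap U\big)$, where $c=I(\mathbf 0)$ and $U$ is a small neighbourhood of $\mathbf 0$. Both are well defined once we know, via Lemma \ref{lem4}, that $I$ satisfies the Cerami condition, so that the topology of the sublevel sets changes only across critical values.

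First I would treat the case $\mathcal{K}=\{\mathbf 0\}$. Then $c=I(\mathbf 0)$ is the \emph{only} critical value, and I fix regular values $a<c<b$. Since $I$ has no critical value above $c$, the deformation lemma yields that $H^1_V(\mathbb{R}^3)$ deformation retracts onto $I^b$, whence $H_q\big(H^1_V(\mathbb{R}^3),I^b\big)=0$ for all $q$; likewise, the absence of critical values in $(a,c)$ gives that $I^{c-\eta}$ deformation retracts onto $I^a$. Because $\mathbf 0$ is the unique critical point at level $c$, excision together with the deformation lemma identifies $H_q(I^b,I^a)\cong H_q(I^{c+\eta},I^{c-\eta})\cong C_q(I,0)$. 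Feeding $H_q\big(H^1_V(\mathbb{R}^3),I^b\big)=0$ into the long exact sequence of the triple $(H^1_V(\mathbb{R}^3),I^b,I^a)$ then forces
$$C_q(I,\infty)=H_q\big(H^1_V(\mathbb{R}^3),I^a\big)\cong H_q(I^b,I^a)\cong C_q(I,0)\qquad\text{for all }q,$$
which is the desired contradiction. The remaining case $\mathcal{K}=\emptyset$ is easier: the deformation lemma then retracts the whole space onto any sublevel set $I^a$, so $C_q(I,\infty)=0$, while $\mathbf 0$ being a regular point gives $C_q(I,0)=0$, and the two again coincide.

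I expect the delicate points to be bookkeeping rather than conceptual. The main obstacle is to justify the deformation retractions under the Cerami condition on the unbounded domain $\mathbb{R}^3$ — in particular that the negative pseudo-gradient flow is globally defined and that no critical value escapes to $\pm\infty$ — but this is exactly the content of the Cerami deformation lemma and is guaranteed once Lemma \ref{lem4} is in hand. A secondary point is the excision step, which requires $\mathbf 0$ to be the sole critical point at its level; this is automatic under the contradiction hypothesis. Thus the proposition reduces entirely to the standard Morse-theoretic machinery, the genuine analytic work having already been done in establishing the Cerami condition.
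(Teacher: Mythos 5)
Your proposal is correct and takes essentially the same approach as the paper: the paper gives no proof of this proposition (it is cited from Bartsch and Li \cite{MR1420790}), and the paragraph immediately preceding it sketches exactly your contrapositive reasoning --- if the critical set is $\{\mathbf 0\}$ then $C_q(I,\infty)\cong C_q(I,0)$ for all $q$, so a discrepancy at some $k$ forces a nontrivial critical point. Your reconstruction via the Cerami deformation lemma, excision, and the long exact sequence of the triple $(H^1_V(\mathbb{R}^3),I^b,I^a)$ is precisely the standard argument that the citation refers to.
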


To compute the critical groups, we need to decompose the workspace $H^{1}_{V}(\mathbb{R}^3)$. Since $0$ is not an eigenvalue of $-\Delta+V(x)$, we can assume that there exists an integer $d\geq0$ such that $0\in(\lambda_{d},\lambda_{d+1})$, where we set $\lambda_{0}=-\infty$. For $d\geq1$, let $H^{1}_{V}(\mathbb{R}^3)^{-}=\mbox{span}\{e_1,\cdots,e_d\}\ \mbox{and}\  H^{1}_{V}(\mathbb{R}^3)^{+}=(H^{1}_{V}(\mathbb{R}^3)^{-})^{\perp}.$  Then $H^{1}_{V}(\mathbb{R}^3)^{-}$ and $H^{1}_{V}(\mathbb{R}^3)^{+}$ are the negative space and positive space of the quadratic form $ B(u)=\frac{1}{2}\int_{\mathbb{R}^3}\left(|\nabla u|^{2}+V(x)u^{2}\right)\text{d}x,$ respectively. Thus, there exits a constant $\eta>0$, such that
\begin{equation}\label{eq22}
\pm B(u)\geq\eta\|u\|_{H^{1}_{V}(\mathbb{R}^3)}^{2},\ u\in  H^{1}_{V}(\mathbb{R}^3)^{\pm}.
\end{equation}

\subsection{Critical groups at zero}
In this section, we will use the following proposition to compute the critical groups of $I_{\epsilon,R}$ at zero.
\begin{proposition}\label{pro2}(see Liu \cite{MR994751})
Suppose $I\in C^{1}(H^{1}_{V}(\mathbb{R}^3),\mathbb{R})$ has a local linking at zero with respect to the decomposition $H^{1}_{V}(\mathbb{R}^3)=H^{1}_{V}(\mathbb{R}^3)^{-}\oplus H^{1}_{V}(\mathbb{R}^3)^{+}$, i.e. for some $r>0$.
\begin{equation*}
\begin{aligned}
I(u)\leq0\  \mbox{for}\ u\in H^{1}_{V}(\mathbb{R}^3)^{-}\cap B_{r},\ \mbox{and}\ I(u)>0\  \mbox{for}\ u\in (H^{1}_{V}(\mathbb{R}^3)^{+}\setminus\{{\bf 0}\})\cap B_{r},
\end{aligned}
\end{equation*}
where $B_{r}=\{u\in H^{1}_{V}(\mathbb{R}^3): \|u\|_{H^{1}_{V}(\mathbb{R}^3)}<r\}$. If $d=\dim H^{1}_{V}(\mathbb{R}^3)^{-}<\infty$, then $C_{d}(I,0)\neq0$.
\end{proposition}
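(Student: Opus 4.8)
The plan is to realise the critical group $C_d(I,0)$ through the finite–dimensional negative subspace and to exhibit the orthogonal projection onto it as a retraction witnessing nontriviality. Throughout I abbreviate $H^{-}=H^{1}_{V}(\mathbb{R}^3)^{-}$ and $H^{+}=H^{1}_{V}(\mathbb{R}^3)^{+}$, and I assume, as is standard for critical groups to be well defined, that ${\bf 0}$ is an isolated critical point; shrinking $r$ if necessary, I may work with the very ball $B_r$ furnished by the local linking. Recall that, with singular homology with coefficients in a field (or in $\mathbb{Z}$),
\[
C_q(I,{\bf 0})=H_q\big(I^{0}\cap B_r,\,(I^{0}\cap B_r)\setminus\{{\bf 0}\}\big),\qquad I^{0}:=\{u:\ I(u)\le 0\},
\]
the identification with this sublevel pair on $B_r$ being justified by excision together with the isolatedness of ${\bf 0}$.

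First I would set up the inclusion of pairs. Since $I(u)\le 0$ for every $u\in H^{-}\cap B_r$, we have $H^{-}\cap B_r\subseteq I^{0}\cap B_r$, and the inclusion induces a homomorphism
\[
j_*:H_q\big(H^{-}\cap B_r,\,(H^{-}\cap B_r)\setminus\{{\bf 0}\}\big)\longrightarrow C_q(I,{\bf 0}).
\]
Because $H^{-}$ is a $d$-dimensional subspace, $H^{-}\cap B_r$ is an open $d$-ball, so a standard computation gives $H_q\big(H^{-}\cap B_r,(H^{-}\cap B_r)\setminus\{{\bf 0}\}\big)\cong H_q(\mathbb{R}^d,\mathbb{R}^d\setminus\{{\bf 0}\})$, which is isomorphic to the coefficient group in degree $q=d$ and vanishes otherwise. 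Thus it suffices to prove that $j_*$ is injective in degree $d$.

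The key step, and the only place where the \emph{strict} positivity in the local linking is used, is the construction of a left inverse to $j$. Let $P^{-}$ be the orthogonal projection of $H^{1}_{V}(\mathbb{R}^3)$ onto $H^{-}$ associated with the decomposition $H^{1}_{V}(\mathbb{R}^3)=H^{-}\oplus H^{+}$. Being an orthogonal projection it is continuous and norm-nonincreasing, so it carries $B_r$ into $H^{-}\cap B_r$. Moreover, if some $u\in (I^{0}\cap B_r)\setminus\{{\bf 0}\}$ satisfied $P^{-}u={\bf 0}$, then $u\in(H^{+}\setminus\{{\bf 0}\})\cap B_r$, whence $I(u)>0$ by the local linking, contradicting $u\in I^{0}$. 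Hence $P^{-}$ restricts to a map of pairs
\[
r:\big(I^{0}\cap B_r,\,(I^{0}\cap B_r)\setminus\{{\bf 0}\}\big)\longrightarrow\big(H^{-}\cap B_r,\,(H^{-}\cap B_r)\setminus\{{\bf 0}\}\big),
\]
and since $P^{-}$ is the identity on $H^{-}$ we have $r\circ j=\mathrm{id}$. Passing to homology, $r_*\circ j_*=\mathrm{id}$, so $j_*$ is injective; in particular its image in $C_d(I,{\bf 0})$ is nonzero, which yields $C_d(I,{\bf 0})\neq 0$.

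I expect the main obstacle to be conceptual rather than computational: one must correctly characterise the critical group by the sublevel pair on the small ball $B_r$, and then observe the precise roles of the two inequalities — the non-strict bound $I\le 0$ on $H^{-}$ is exactly what makes the inclusion $j$ land in $I^{0}$, while the strict bound $I>0$ on $H^{+}\setminus\{{\bf 0}\}$ is exactly what keeps the projection $P^{-}$ away from the origin, so that $r$ is a genuine map of pairs. The degenerate case $d=0$ is trivial: then $H^{-}=\{{\bf 0}\}$, the local linking says ${\bf 0}$ is a strict local minimiser of $I$, and $C_0(I,{\bf 0})$ is the coefficient group, consistent with the claim.
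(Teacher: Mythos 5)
Your proposal is correct, but note that the paper does not actually prove this proposition: it is imported as a black box from Liu \cite{MR994751}, so what you have written is a self-contained proof of the cited result rather than a variant of an argument in the paper. Your route --- include the pair $\bigl(H^{-}\cap B_r,(H^{-}\cap B_r)\setminus\{{\bf 0}\}\bigr)$ into the sublevel pair and retract back via the orthogonal projection $P^{-}$ --- is sound: since in this paper $H^{1}_{V}(\mathbb{R}^3)^{+}=(H^{1}_{V}(\mathbb{R}^3)^{-})^{\perp}$, the projection is norm-nonincreasing and maps $I^{0}\cap B_r$ into $H^{-}\cap B_r$; the strict inequality $I>0$ on $(H^{+}\setminus\{{\bf 0}\})\cap B_r$ is precisely what keeps $P^{-}u\neq{\bf 0}$ for $u\in(I^{0}\cap B_r)\setminus\{{\bf 0}\}$; and $P^{-}\circ j=\mathrm{id}$ makes $j_*$ injective on the local homology of the punctured $d$-ball, which is the coefficient group in degree $d$. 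This is more elementary than the deformation arguments of Liu's original paper, at the mild cost of using orthogonality of the splitting (for a general topological splitting, where one only has $\|P^{-}\|\le C$, the same argument survives by taking $H^{-}\cap B_{Cr}$ as the target pair, since the inclusion of one punctured ball in $H^{-}$ into a larger one is an isomorphism on local homology). Two points you rightly flagged should stay explicit: $I({\bf 0})=0$, which follows from the two linking inequalities by continuity and is needed for $I^{0}$ to be the correct level; and the isolatedness of ${\bf 0}$, without which $C_d(I,{\bf 0})$ is undefined --- an assumption that is harmless in the paper's application, because a non-isolated critical point at ${\bf 0}$ already produces the nontrivial critical points sought in Theorem \ref{thm2}.
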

\begin{lemma}\label{lem5}
Under assumptions ${\bf (V_1)}$, ${\bf (V_2)}$, ${\bf (f)}$ and ${\bf (f')}$. For any $R>1$ fixed, there exists $\epsilon''(R)>0$ such that for any $\epsilon\in(0,\epsilon''(R))$ then the functional $I_{\epsilon,R}$ has a local linking at zero with respect to decomposition $H^{1}_{V}(\mathbb{R}^3)=H^{1}_{V}(\mathbb{R}^3)^{-}\oplus H^{1}_{V}(\mathbb{R}^3)^{+}$.
\end{lemma}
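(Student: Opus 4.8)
The plan is to verify the two defining inequalities of a local linking directly from the structure of $I_\epsilon$ in \eqref{I_epsilon_2}, after first recording that the quadratic part of $I_\epsilon$ is exactly the form $B$. Since $\tilde V=V+m$ by \eqref{tilde_V}, the two quadratic terms combine,
$$\frac12\int_{\mathbb R^3}\left(|\nabla u|^2+\tilde V(x)u^2\right)\text{d}x-\frac m2\int_{\mathbb R^3}u^2\text{d}x=B(u),$$
so that
$$I_\epsilon(u)=B(u)-\frac14\int_{\mathbb R^3}\phi_u u^2\text{d}x-\frac1{\epsilon^2}\int_{\mathbb R^3}\tilde F(\epsilon u)\text{d}x .$$
This is the form I would work with throughout, with $r>0$ and the admissible range of $\epsilon$ to be fixed at the end.

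For the negative subspace, I would restrict to $\epsilon\in(0,\delta_0^2/4)$ so that \eqref{eq21} applies and $\tilde F(\epsilon u)\geq0$ pointwise. Then for every $u\in H^1_V(\mathbb R^3)^-$ all three contributions to $I_\epsilon(u)$ are nonpositive: $B(u)\leq-\eta\|u\|_{H^1_V(\mathbb R^3)}^2$ by \eqref{eq22}, the Poisson term $-\frac14\int_{\mathbb R^3}\phi_u u^2\,\text{d}x\leq0$ because $\phi_u u^2\geq0$, and $-\frac1{\epsilon^2}\int_{\mathbb R^3}\tilde F(\epsilon u)\,\text{d}x\leq0$. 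Hence $I_\epsilon(u)\leq0$ on all of $H^1_V(\mathbb R^3)^-$, which in particular yields the required inequality on $H^1_V(\mathbb R^3)^-\cap B_r$ for any $r>0$.

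For the positive subspace I would use \eqref{eq22} in the form $B(u)\geq\eta\|u\|_{H^1_V(\mathbb R^3)}^2$, estimate the Poisson term from above by $\frac C4\|u\|_{H^1_V(\mathbb R^3)}^4$ via \eqref{phi_estimate}, and control the truncated nonlinearity by the super-quadratic bound \eqref{eq6}, namely $\frac1{\epsilon^2}\int_{\mathbb R^3}\tilde F(\epsilon u)\,\text{d}x\leq\delta\int_{\mathbb R^3}u^2\,\text{d}x+C_\delta\epsilon\int_{\mathbb R^3}|u|^3\,\text{d}x$. Inserting the Sobolev embeddings $\int_{\mathbb R^3}u^2\,\text{d}x\leq C\|u\|_{H^1_V(\mathbb R^3)}^2$ and $\int_{\mathbb R^3}|u|^3\,\text{d}x\leq C\|u\|_{H^1_V(\mathbb R^3)}^3$ gives
$$I_\epsilon(u)\geq(\eta-C\delta)\|u\|_{H^1_V(\mathbb R^3)}^2-\frac C4\|u\|_{H^1_V(\mathbb R^3)}^4-C_\delta C\epsilon\|u\|_{H^1_V(\mathbb R^3)}^3 .$$
I would then fix $\delta$ so small that $\eta-C\delta\geq\eta/2$; this determines $C_\delta$, after which the cubic and quartic terms are of higher order, and choosing $r>0$ small enough makes the right-hand side strictly positive for $0<\|u\|_{H^1_V(\mathbb R^3)}\leq r$, giving $I_\epsilon(u)>0$ on $(H^1_V(\mathbb R^3)^+\setminus\{{\bf 0}\})\cap B_r$.

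The only genuine subtlety, and the step I would treat most carefully, is the nonpositivity of $\tilde F$ on the negative subspace, which is not automatic and relies on combining the sign condition ${\bf (f')}$ with the definition \eqref{eq5} of the truncation: for $\epsilon\leq\delta_0^2/4$ the cutoff $d(\cdot/\sqrt\epsilon)$ is supported where $|t|\leq2\sqrt\epsilon\leq\delta_0$, so $F(t)\geq0$ there and each summand of $\tilde F$ is nonnegative, which is precisely \eqref{eq21}. With $r$ dictated by the positive-direction estimate and $\epsilon$ restricted as above, both inequalities of Proposition~\ref{pro2} hold and the local linking follows, noting $d=\dim H^1_V(\mathbb R^3)^-<\infty$ by construction.
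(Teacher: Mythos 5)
Your proposal is correct and follows essentially the same route as the paper's proof: both reduce the quadratic part to the form $B$, use \eqref{eq22} together with the nonnegativity of $\phi_u u^2$ and of $\tilde F$ (for $\epsilon<\delta_0^2/4$, i.e.\ \eqref{eq21}) to get $I_\epsilon\leq 0$ on $H^1_V(\mathbb{R}^3)^-$, and on $H^1_V(\mathbb{R}^3)^+$ combine $B(u)\geq\eta\|u\|^2_{H^1_V(\mathbb{R}^3)}$ with the bound \eqref{phi_estimate} on the Poisson term and the $\delta t^2+C_\delta|t|^3$ control \eqref{eq6} of $\tilde F$ to conclude positivity in a small ball. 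Your explicit verification of \eqref{eq21} from the support of the cutoff $d(\cdot/\sqrt{\epsilon})$ and the sign condition ${\bf (f')}$ is a welcome detail that the paper only asserts (in the proof of Lemma \ref{lem4}), but it is not a different argument.
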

\begin{proof}
 Follows from the proof of Lemma \ref{lem3.5}. There exists $\epsilon''(R)>0$, such that for any $\epsilon\in(0,\epsilon''(R))$ and $u\in H^{1}_{V}(\mathbb{R}^3)^{+}\setminus\{\bf 0\}$ with $\|u\|_{H^{1}_{V}(\mathbb{R}^3)}=r$, taking $r>0$ small enough, by \eqref {eq22} we obtain
\begin{equation*}
\begin{aligned}
I_{\epsilon,R}(u)&=\frac{1}{2}\int_{\mathbb R^3}\left(|\nabla u|^2+V(x)u^2\right)\text{d}x-\frac{1}{4}\int_{\mathbb R^3}\phi_{u} u^2\text{d}x-\frac{1}{\epsilon^2}\eta _{R}\left (\|u\|^{2}_{H^1_{V}(\mathbb{R}^3)}\right )\int_{\mathbb{R}^{3}}F\left (\epsilon \eta _{R} \left ( \left | u\right |\right )u\right )\text{d}x\\
&\geq\eta\|u\|^2_{H^{1}_{V}(\mathbb{R}^3)}-\frac{C}{4}\|u\|^4_{H^{1}_{V}(\mathbb{R}^3)}\\
&\quad-\left(\delta\eta _{R}\left ( \|u\|^{2}_{H^1_{V}(\mathbb{R}^3)}\right )\int_{\mathbb{R}^{3}}u^{2}\text{d}x +C_{\delta,R}\epsilon^{p-2}\eta _{R}\left ( \|u\|^{2}_{H^1_{V}(\mathbb{R}^3)}\right )\int_{\mathbb{R}^{3}}\left | u\right |^{p}\text{d}x\right)\\
&\geq\eta\|u\|^2_{H^{1}_{V}(\mathbb{R}^3)}-\frac{C}{4}\|u\|^4_{H^{1}_{V}(\mathbb{R}^3)}-\left(\frac{\eta}{4} \|u\|^2_{H^{1}_{V}(\mathbb{R}^3)} +C\|u\|^p_{H^{1}_{V}(\mathbb{R}^3)}\right)\\
&\geq \frac{\eta}{4}\|u\|^2_{H^{1}_{V}(\mathbb{R}^3)}=\frac{\eta r^2}{4}>0.
\end{aligned}
\end{equation*}
Then we obtain
\begin{equation}\label{eq24}
    I_{\epsilon,R}(u)>0 \  \mbox{for}\ u\in (H^{1}_{V}(\mathbb{R}^3)^{+}\setminus\{{\bf0}\})\cap B_{r}.
\end{equation}

On the other hand, for $u\in H^{1}_{V}(\mathbb{R}^3)^{-}\cap B_{r}$ and $\epsilon\in (0,\frac{\delta_0}{R})$, it is easy to see that
\begin{equation*}
\begin{aligned}
I_{\epsilon,R}(u)&=\frac{1}{2}\int_{\mathbb R^3}\left(|\nabla u|^2+V(x)u^2\right)\text{d}x-\frac{1}{4}\int_{\mathbb R^3}\phi_{u} u^2\text{d}x-\frac{1}{\epsilon^2}\eta _{R}\left (\|u\|^{2}_{H^1_{V}(\mathbb{R}^3)}\right )\int_{\mathbb{R}^{3}}F\left (\epsilon \eta _{R} \left ( \left | u\right |\right )u\right )\text{d}x\\
&\leq-\eta\|u\|^2_{H^{1}_{V}(\mathbb{R}^3)}-\frac{1}{4}\int_{\mathbb R^3}\phi_{u} u^2\text{d}x-\frac{1}{\epsilon^2}\eta _{R}\left (\|u\|^{2}_{H^1_{V}(\mathbb{R}^3)}\right )\int_{\mathbb{R}^{3}}F\left (\epsilon \eta _{R} \left ( \left | u\right |\right )u\right )\text{d}x\\
&\leq-\eta \|u\|^2_{H^{1}_{V}(\mathbb{R}^3)}.
\end{aligned}
\end{equation*}
From this and \eqref{eq24}, it implies $I_{\epsilon,R}$ has a local linking property at zero.
\end{proof}
\subsection{Critical groups at infinity}
\begin{lemma}\label{lem6}
If assumptions ${\bf (V_1)}$, ${\bf (V_2)}$, ${\bf (f)}$ and ${\bf (f')}$ hold. For any $R>1$ fixed, there exists a constant $\epsilon'''(R)>0$ such that for all $\epsilon\in(0,\epsilon'''(R))$, it has $C_{q}(I_{\epsilon,R},\infty)\cong0$ with $q\in \mathbb{N}=\{0,1,2,\cdots\}$.
\end{lemma}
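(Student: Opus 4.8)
The plan is to compute the critical group at infinity as $C_q(I_\epsilon,\infty):=H_q\big(H^1_V(\mathbb R^3),I_\epsilon^{-a}\big)$, where $I_\epsilon^{-a}=\{u:I_\epsilon(u)\le -a\}$ and $a>0$ is chosen below every critical level, and to prove it vanishes by showing that $I_\epsilon^{-a}$ is contractible. First I would fix such an $a$: by Lemma \ref{lem4} the functional $I_\epsilon$ satisfies the Cerami condition, so its critical set is bounded, say contained in a ball $B_R$; since \eqref{phi_estimate} and \eqref{eq6} give $I_\epsilon(u)=B(u)-\tfrac14\int_{\mathbb R^3}\phi_u u^2\,\mathrm dx-\tfrac{1}{\epsilon^2}\int_{\mathbb R^3}\tilde F(\epsilon u)\,\mathrm dx\ge -C\|u\|_{H^1_V(\mathbb R^3)}^2-C\|u\|_{H^1_V(\mathbb R^3)}^3-C\|u\|_{H^1_V(\mathbb R^3)}^4$, the functional is bounded below on $B_R$, so choosing $a>-\inf_{B_R}I_\epsilon$ guarantees that $I_\epsilon^{-a}$ contains no critical points and lies below all critical values. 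Once $I_\epsilon^{-a}$ is contractible, the long exact sequence of the pair $(H^1_V(\mathbb R^3),I_\epsilon^{-a})$ together with the contractibility of the Hilbert space $H^1_V(\mathbb R^3)$ yields $H_q(H^1_V(\mathbb R^3),I_\epsilon^{-a})\cong\widetilde H_{q-1}(I_\epsilon^{-a})=0$ for every $q$.

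The second step records the super-quadratic behaviour of $I_\epsilon$ at infinity. For $u\neq{\bf 0}$ the Poisson term is $4$-homogeneous, $\int_{\mathbb R^3}\phi_{tu}(tu)^2\,\mathrm dx=t^4\int_{\mathbb R^3}\phi_u u^2\,\mathrm dx$, while $\tilde F\ge0$ by assumptions ${\bf(f)}$, ${\bf(f')}$ (cf. \eqref{eq21}). Hence $I_\epsilon(tu)\le t^2B(u)-\tfrac{t^4}{4}\int_{\mathbb R^3}\phi_u u^2\,\mathrm dx\to-\infty$ as $t\to+\infty$, because $\int_{\mathbb R^3}\phi_u u^2\,\mathrm dx=\int_{\mathbb R^3}|\nabla\phi_u|^2\,\mathrm dx>0$. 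Thus every ray $t\mapsto tu$ eventually enters and remains in $I_\epsilon^{-a}$, which is the mechanism that lets me retract $I_\epsilon^{-a}$ onto a sphere.

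The heart of the argument, and the step I expect to be the main obstacle, is a radial monotonicity (angle) property in the deep sublevel region: there exist $a>0$ and $\epsilon>0$ small such that $\langle I_\epsilon'(u),u\rangle<0$ whenever $I_\epsilon(u)\le -a$. The difficulty is that $B$ is indefinite under the sign-changing potential, and by \eqref{phi_estimate} the Poisson term carries only an upper bound in $\|\cdot\|_{H^1_V(\mathbb R^3)}$ with no matching lower bound, so the quartic term does not by itself dominate the quadratic part. Here the inequality emphasized in the Remark, namely \eqref{eq11}, $\int_{\mathbb R^3}\phi_u u^2\,\mathrm dx\ge\int_{\mathbb R^3}|u|^3\,\mathrm dx-\tfrac14\int_{\mathbb R^3}|\nabla u|^2\,\mathrm dx$, is decisive: it converts the Poisson term into an $L^3$ gain which, combined with the truncation bounds \eqref{eq6} and the smallness of $\epsilon$, forces $\langle I_\epsilon'(u),u\rangle$ strictly negative on $\{I_\epsilon\le -a\}$. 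Making this estimate uniform over the indefinite directions, rather than ray by ray, is the technical crux.

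Granting this angle property, each ray crosses $\{u:I_\epsilon(u)=-a\}$ exactly once and transversally, so the crossing radius defines a continuous function $T:S\to(0,+\infty)$ on the unit sphere $S=\{u:\|u\|_{H^1_V(\mathbb R^3)}=1\}$, and sliding each $u\in I_\epsilon^{-a}$ inward along its ray to the point $T(u/\|u\|)\,u/\|u\|$ yields a strong deformation retraction of $I_\epsilon^{-a}$ onto $\{u:I_\epsilon(u)=-a\}$, which is radially homeomorphic to $S$. As $S$ is the unit sphere of an infinite-dimensional Hilbert space it is contractible, so $I_\epsilon^{-a}$ is contractible and the exact-sequence computation of the first paragraph gives $C_q(I_\epsilon,\infty)\cong0$ for all $q\in\mathbb N$.
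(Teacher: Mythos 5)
Your outline follows the same architecture as the paper's proof: ray divergence $I_\epsilon(su)\to-\infty$, an ``angle property'' on deep sublevel sets, and a radial deformation retraction giving $C_q(I_\epsilon,\infty)\cong 0$ (the paper deforms $H^{1}_{V}(\mathbb{R}^3)\setminus B_1$ onto $I_\epsilon^{-B}$ and computes $H^{q}(H^{1}_{V}(\mathbb{R}^3),H^{1}_{V}(\mathbb{R}^3)\setminus B_1)\cong 0$; your variant via contractibility of $I_\epsilon^{-a}$ is equivalent). The genuine gap is that you never prove the angle property: you explicitly write ``granting this angle property,'' yet that claim is the entire content of the lemma --- everything else is routine. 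Moreover, the route you sketch for it (a direct estimate in which \eqref{eq11} turns the Poisson term into an $L^3$ gain that dominates the quadratic part) cannot work by itself, because of the term $-\frac{m}{2}\int_{\mathbb{R}^3}u^2\,\text{d}x$ in \eqref{I_epsilon_2}. After using \eqref{eq11} and \eqref{eq6} with $\delta=\frac{1}{64}$ and $\epsilon$ small, the best one gets is
\begin{equation*}
I_\epsilon(u)-\frac{1}{3}\langle I_\epsilon'(u),u\rangle\geq \frac{1}{8}\|u\|^2_{H^{1}_{V}(\mathbb{R}^3)}-\frac{m}{6}\int_{\mathbb{R}^3}u^2\,\text{d}x,
\end{equation*}
and since $m$ may be large, the negative $L^2$ term cannot be absorbed into $\frac{1}{8}\|u\|^2_{H^{1}_{V}(\mathbb{R}^3)}$ by any norm inequality; this is exactly the indefiniteness issue you flagged but did not resolve.

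The paper closes this hole with a compactness argument, which is the missing idea. Suppose $I_\epsilon(u_n)\leq -n$ while $\langle I_\epsilon'(u_n),u_n\rangle\geq 0$; then $\|u_n\|_{H^{1}_{V}(\mathbb{R}^3)}\to\infty$ (otherwise $I_\epsilon(u_n)$ would be bounded), and dividing the displayed inequality by $\|u_n\|^2_{H^{1}_{V}(\mathbb{R}^3)}$ shows that $v_n=u_n/\|u_n\|_{H^{1}_{V}(\mathbb{R}^3)}$ satisfies $\frac{m}{6}\|v_n\|^{2}_{L^{2}(\mathbb{R}^3)}\geq\frac{1}{8}$; the compact embedding $H^{1}_{V}(\mathbb{R}^3)\hookrightarrow L^{2}(\mathbb{R}^3)$ from ${\bf (V_1)}$ gives $v_n\to v\neq 0$ strongly in $L^{2}(\mathbb{R}^3)$, so $|u_n|\to\infty$ on a set $\mathcal{M}$ of positive measure. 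Then, using $\tilde F'(t)t\geq 0$ (valid for small $\epsilon$ by ${\bf (f)}$ and ${\bf (f')}$) and Fatou's lemma, $\int_{\mathcal{M}}\phi_{u_n}v_n^2\,\text{d}x\to+\infty$, whence $\langle I_\epsilon'(u_n),u_n\rangle\leq \|u_n\|^2_{H^{1}_{V}(\mathbb{R}^3)}\bigl(1-\int_{\mathcal{M}}\phi_{u_n}v_n^2\,\text{d}x\bigr)\to-\infty$, contradicting $\langle I_\epsilon'(u_n),u_n\rangle\geq0$. Without this step your proof is incomplete, although the remaining parts of your outline (continuity of the crossing time, the retraction, the homology computation) are correct. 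A secondary error: the Cerami condition does not imply that the critical set is bounded, only that critical points at bounded levels form compact sets; in the paper the well-definedness of $C_q(I_\epsilon,\infty)$ is itself a byproduct of the angle-property claim, which shows there are no critical points below the level $-A$.
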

\begin{proof}
Define $S^{\infty}$ as the unit sphere in $H^{1}_{V}(\mathbb{R}^3)$. Firstly, we can show that
\begin{equation}\label{eq25}
I_{\epsilon,R}(sh)\to-\infty \ \mbox{as}\ s\to\infty \ \mbox{for any}\ h\in S^{\infty}.
\end{equation}
Similar with Lemma \ref{lem4}, for any $\epsilon\in (0,\frac{\delta_0}{R})$, from $F\left (\epsilon \eta _{R} \left ( \left | t\right |\right )t\right )\geq 0$ we deduce
\begin{equation*}
		\begin{split}
		I_{\epsilon,R}(sh)&=\frac{1}{2}\int_{\mathbb{R}^3}\left(|\nabla (sh)|^{2}+\tilde V(x)(sh)^{2}\right)\text{d}x-\frac{1}{4}\int_{\mathbb{R}^3}\phi_{(sh)}(sh)^{2}\text{d}x\\
		&\indent-\frac{1}{\epsilon^2}\eta _{R}\left (\|sh\|^{2}_{H^1_{V}(\mathbb{R}^3)}\right )\int_{\mathbb{R}^{3}}F\left (\epsilon \eta _{R} \left ( \left | sh\right |\right )sh\right )\text{d}x-\frac{m}{2}\int_{\mathbb{R}^3}(sh)^{2}\text{d}x\\
		&\leq s^{2}\left(\frac{1}{2}-\frac{s^2}{4}\int_{\mathbb{R}^3}\phi_{h}h^{2}\text{d}x\right)\to-\infty,~~\text{as}~s\to\infty.
	\end{split}\end{equation*}
	
We next shall show the following claim is right.\\
{\bf Claim:} There is a constant $A>0$ such that if $I_{\epsilon,R}(u)\leq-A$, then
$$\frac{\mbox{d}}{\mbox{d}t}|_{t=1}I_{\epsilon,R}(t u)<0.$$
If this claim is false, there exists a sequence $\{u_n\}\subset H^{1}_{V}(\mathbb{R}^3)$ such that
\begin{equation}\label{eq26}
I_{\epsilon,R}(u_n)\leq-n\ \mbox{and}\ \langle I_{\epsilon,R}'(u_n),u_n\rangle=\frac{\mbox{d}}{\mbox{d}t}|_{t=1}I_{\epsilon,R}(t u_n)\geq0.
\end{equation}
Notice that if $\{u_{n}\}_{n=1}^\infty$ is a bounded sequence in $H^{1}_{V}(\mathbb{R}^3)$, then $\{I_{\epsilon,R}(u_{n})\}_{n=1}^{\infty}$ is also bounded. Then by $I_{\epsilon,R}(u_n)\leq -n$, it implies
$$\int_{\mathbb{R}^3}\left(|\nabla u_n|^2+\widetilde{V}(x)u_n^{2}\right)\text{d}x\to\infty \ \mbox{as}\ n\to\infty.$$

For large $n$, as $\|u_n\|_{H^1_{V}}^2>R$ one has 
\begin{equation}\label{eq27}
		\begin{split}
		0&\geq I_{\epsilon,R}(u_n)-\frac{1}{3}\langle I_{\epsilon,R}'(u_n),u_n\rangle\\
		&=\left(\frac{1}{2}-\frac{1}{3}\right)\int_{\mathbb R^3}\left(|\nabla u_n|^2+\tilde V(x)u_n^2\right)\text{d}x+\left(\frac{1}{3}-\frac{1}{4}\right)\int_{\mathbb R^3}\phi_{u_n}u_n^2\text{d}x\\
        &\quad\quad-\left(\frac{1}{2}-\frac{1}{3}\right)\int_{\mathbb{R}^3}m u^{2}_n\text{d}x\\
		&\geq\left(\frac{1}{2}-\frac{1}{3}\right)\int_{\mathbb R^3}\left(|\nabla u_n|^2+\tilde V(x)u_n^2\right)\text{d}x-\left(\frac{1}{2}-\frac{1}{3}\right)\int_{\mathbb{R}^3}m u^{2}_n\text{d}x.
	\end{split}\end{equation}

Setting $v_{n}=\frac{u_n}{\|u_n\|_{H^{1}_{V}(\mathbb{R}^3)}}$, $\{v_n\}_{n=1}^\infty$ is a bounded sequence in $H^{1}_{V}(\mathbb{R}^3)$. Up to subsequence, one has
\begin{equation*}
v_n\to v\ \mbox{ weakly in}\ H^{1}_{V}(\mathbb{R}^3),\ v_n\to v\ \mbox{ strongly in}\ L^{2}(\mathbb{R}^3),\ v_n\to v\ \mbox{a.e. in}\ \mathbb{R}^3.
\end{equation*}
Multiplying both side of \eqref{eq27} by $\|u_n\|^{-2}_{H^{1}_{V}(\mathbb{R}^3)}$, we obtain\\
\begin{equation*}
\left(\frac{1}{2}-\frac{1}{3}\right)\int_{\mathbb{R}^3}m v^{2}_n\text{d}x\geq\frac{1}{6}.
\end{equation*}
From $v_n\to v$ strongly in $L^{2}(\mathbb{R}^3)$, it follows that $v\neq0$. Thus the Lebesgue measure of the set
$$\mathcal{M}=\{x\in\mathbb{R}^3:v(x)\neq 0\}$$
is positive.

For $x\in\mathcal{M}$, as $n\to\infty$, we deduce
$$v_n(x)\to v(x)\neq 0~\text{and}~u_{n}(x)=v_{n}(x)\|u_n\|_{H^1_{V}(\mathbb{R}^3)}\to\infty.$$
It follows from the Fatou lemma that
\begin{equation*}
	\begin{split}
&\indent\int_{\mathbb{R}^3}\left(\phi_{u_n}u^{2}_{n}+mu_{n}^{2}\right)\|u_n\|^{-2}_{H^{1}_{V}(\mathbb{R}^3)}\text{d}x\\
&\geq\int_{\mathbb{R}^3}\frac{\phi_{u_n}u^{2}_{n}}{u_{n}^{2}}v^{2}_{n}\text{d}x\\
&\geq \int_{\mathcal{M}}\phi_{u_n}v^{2}_n\text{d}x\to\infty,
\end{split}
\end{equation*}
as $n\to\infty$. Thus
\begin{equation*}
		\begin{split}
		0&\leq \langle I_{\epsilon,R}'(u_n),u_n\rangle\\
		&=\int_{\mathbb{R}^3}\left(|\nabla u_n|^{2}+\tilde V(x)u_n^{2}\right)\text{d}x-\int_{\mathbb{R}^3}\phi_{u_n}u_n^{2}\text{d}x-m\int_{\mathbb{R}^3}u_n^{2}\text{d}x\\
		&\leq\|u_n\|_{H^{1}_{V}(\mathbb{R}^3)}^{2}\left(1-\int_{\mathcal{M}}\phi_{u_n}v^{2}_n\text{d}x\right)\to-\infty,
	\end{split}\end{equation*}
as  $n\to\infty$. This is impossible. Therefore, our claim is true.

Due to this claim and \eqref{eq25}, for $B>A$ large enough, there exists an unique $T:=T(u)>0$ such that
$$I_{\epsilon,R}(T(u)u)=-B\ \ \text{for every }u\in S^{\infty}.$$
It follows from the implicit function theorem that
\begin{center}
$T$ is a continuous function from $S^{\infty}$ to $\mathbb{R}$.
\end{center}
Thus the deformation retract $\eta:[0,1]\times(H^{1}_{V}(\mathbb{R}^3)\setminus B_{1})\to H^{1}_{V}(\mathbb{R}^3)$ denoted as
$$\eta(s,u)=(1-s)u+sT(u)u$$
with $\eta(0,u)=u$, $\eta(1,u)\in I_{\epsilon,R}^{-B}$. Thus
$$C_{q}(I_{\epsilon,R},\infty)=H^{q}(H^{1}_{V}(\mathbb{R}^3),I_{\epsilon,R}^{-B})\cong H^{q}(H^{1}_{V}(\mathbb{R}^3),H^{1}_{V}(\mathbb{R}^3)\setminus B_{1})\cong0,\ \mbox{for all}\ q\in \mathbb{N}.$$
This completes the proof of  Lemma \ref{lem6}.
\end{proof}
\noindent{\bf Proof of Theorem \ref{thm2}} It follows from Lemma \ref{lem4} that $I_{\epsilon,R}$ satisfies the Cerami conditions. By Lemma \ref{lem5}, $I_{\epsilon,R}$ has a local linking at zero with respect to the decomposition $H^{1}_{V}(\mathbb{R}^3)=H^{1}_{V}(\mathbb{R}^3)^{-}\oplus H^{1}_{V}(\mathbb{R}^3)^{+}$, thus from Proposition \ref{pro2}, for $d=\dim H^{1}_{V}(\mathbb{R}^3)^{-}$, it implies $C_{d}(I_{\epsilon},0)\neq0.$ On the other hand, due to Lemma \ref{lem6}, we have for all $q\in\mathbb{N}$, $C_{q}(
I_{\epsilon,R},\infty)=0$. Thus, by Proposition \ref{pro1}, for any $R>1$, there exists $\epsilon^{\ast}(R)=\min\{\epsilon(R),\epsilon''(R),\epsilon'''(R)\}$ for any $\epsilon\in(0,\epsilon^{\ast}(R))$ such that  $I_{\epsilon,R}$ has a nontrivial critical point $u_{\epsilon,R}$ and $I_{\epsilon,R}(u_{\epsilon,R})<0$.

By the proof of Lemma \ref{lem3.5}, for any $R>1$ fixed, one has
\begin{equation*}
		\begin{split}
		&\lim\limits_{\epsilon\to0^+}\left(I_{\epsilon,R}(u_{\epsilon,R})-I_{0}(u_{\epsilon,R})\right)\\
        =&\lim\limits_{\epsilon\to0^+}\frac{1}{\epsilon ^{2}}\eta _{R}\left (\|u_{\epsilon,R}\|^{2}_{H^1_{V}(\mathbb{R}^3)}\right )\int_{\mathbb{R}^{3}}F\left (\epsilon \eta _{R} \left ( \left | u_{\epsilon,R}\right |\right )u_{\epsilon,R}\right )\text{d}x\\
        =&0.
	\end{split}\end{equation*}
Similarly, for any $v\in H^{1}_{V}(\mathbb{R}^3)$, it implies
\begin{equation*}
		\begin{split}
		&\lim\limits_{\epsilon\to0^+}\langle I_{\epsilon,R}'(u_{\epsilon,R})-I_{0}'(u_{\epsilon,R}),u_{\epsilon,R}+v\rangle\\
        =&\lim\limits_{\epsilon\to0^+}\frac{2}{\epsilon^{2}}\eta _{R}'\left ( \|u_{\epsilon,R}\|^{2}_{H^1_{V}(\mathbb{R}^3)} \right )\int_{\mathbb{R}^{3}}(|\nabla u_{\epsilon,R}|^2+\tilde V(x)u^2_{\epsilon,R}) \text{d}x\int_{\mathbb{R}^{3}}F\left (\epsilon\eta _{R}\left ( \left | u_{\epsilon,R}\right | \right ) u_{\epsilon,R}\right )\text{d}x\\
        &+\lim\limits_{\epsilon\to0^+}\frac{1}{\epsilon}\eta _{R}\left ( \|u_{\epsilon,R}\|^{2}_{H^1_{V}(\mathbb{R}^3)} \right )\int_{\mathbb{R}^{3}}\left(f\left (\epsilon \eta _{R}\left ( \left | u_{\epsilon,R}\right | \right ) u_{\epsilon,R}\right )\left (\eta _{R}\left ( \left | u_{\epsilon,R}\right | \right ) u_{\epsilon,R} +\eta _{R}'\left ( \left | u_{\epsilon,R}\right |\right )|u_{\epsilon,R}|u_{\epsilon,R} \right )\right)\text{d}x\\
        &+\lim\limits_{\epsilon\to0^+}\frac{2}{\epsilon^{2}}\eta _{R}'\left ( \|u_{\epsilon,R}\|^{2}_{H^1_{V}(\mathbb{R}^3)} \right )\int_{\mathbb{R}^{3}}(\nabla u_{\epsilon,R}\nabla v+\tilde V(x)u_{\epsilon,R}v) \text{d}x\int_{\mathbb{R}^{3}}F\left (\epsilon\eta _{R}\left ( \left | u_{\epsilon,R}\right | \right ) u_{\epsilon,R}\right )\text{d}x\\
        &+\lim\limits_{\epsilon\to0^+}\frac{1}{\epsilon}\eta _{R}\left ( \|u_{\epsilon,R}\|^{2}_{H^1_{V}(\mathbb{R}^3)} \right )\int_{\mathbb{R}^{3}}\left(f\left (\epsilon \eta _{R}\left ( \left | u_{\epsilon,R}\right | \right ) u_{\epsilon,R}\right )\left (\eta _{R}\left ( \left | u_{\epsilon,R}\right | \right ) v +\eta _{R}'\left ( \left | u_{\epsilon,R}\right |\right )|u_{\epsilon,R}|v \right )\right)\text{d}x\\
        =&0.
	\end{split}\end{equation*}
Thus when $\epsilon\to0^{+}$, $\{u_{\epsilon,R}\}$ is a Cerami sequence of $I_0$. Then by the Cerami condition of $I_0$, we have $u_{\epsilon,R}\to u_0$ strongly in $H^1_{V}(\mathbb{R}^3)$ as $\epsilon\to0^+$, moreover $\{u_{\epsilon,R}\}$ is uniformly bounded in $H^1_{V}(\mathbb{R}^3)$. Similar with Lemma \ref{lem3}, we can obtain the $L^{\infty}$ norm of $u_{\epsilon,R}$ is uniformly bounded about $\epsilon$ and $R$. This means that there exists a positive constant $M$ independent of $\epsilon$ and $R$, such that for any $\epsilon\in(0,\epsilon^{\ast}(R))$ one has
$$\max\{\|u_{\epsilon,R}\|^2_{H^1_{V}(\mathbb{R}^3)},\|u_{\epsilon,R}\|_{L^{\infty}(\mathbb{R}^3)}\}\leq M.$$

Finally, let $R=M+1$, for any $\epsilon\in(0,\epsilon^{\ast}(M+1))$, then
\begin{equation*}
\begin{split}
    I_{\epsilon,M+1}\left ( u_{\epsilon,M+1} \right )&=\frac{1}{2}\int_{\mathbb R^3}\left(|\nabla u_{\epsilon,M+1}|^2+V(x)u_{\epsilon,M+1}^2\right)\text{d}x-\frac{1}{4}\int_{\mathbb R^3}\phi_{u_{\epsilon,M+1}}u_{\epsilon,M+1}^2\text{d}x\\
    &\quad-\frac{1}{\epsilon^2}\eta _{M+1}\left (\|u_{\epsilon,M+1}\|^{2}_{H^1_{V}(\mathbb{R}^3)}\right )\int_{\mathbb{R}^{3}}F\left (\epsilon \eta _{M+1} \left ( \left | u_{\epsilon,M+1}\right |\right )u_{\epsilon,M+1}\right )\text{d}x\\
    &=\frac{1}{2}\int_{\mathbb{R}^3}\left(|\nabla u_{\epsilon,M+1}|^{2}+V(x)u_{\epsilon,M+1}^2\right)\text{d}x-\frac{1}{4}\int_{\mathbb{R}^{3}}\phi _{u_{\epsilon,M+1}}u_{\epsilon,M+1}^{2}\text{d}x\\
    &\quad-\frac{1}{\epsilon ^{2}}\int_{\mathbb{R}^{3}}F\left (\epsilon u_{\epsilon,M+1}\right )\text{d}x
    \end{split}
\end{equation*}
has a nontrivial critical point $u_{\epsilon,M+1}$.  Hence, denote $\epsilon^{**}=\epsilon^{\ast}(M+1)$, then for any $\epsilon\in(0,\epsilon^{**})$, $u_{\epsilon,M+1}$ is also the solution of \eqref{eq2}. Consequently, $\epsilon u_{\epsilon,M+1}$ is the solution of $\eqref{eq1}$ and $\lambda^{**}=\frac{1}{(\epsilon^{**})^{2}}$. Thus Theorem \ref{thm2} has been proved.

\section{The proof of Theorem \ref{thm3}}

Now we study the asymptotical behavior of non-trivial solutions in Theorem \ref{thm}.

\noindent{\bf Proof of Theorem \ref{thm3}} In order to highlight the parameter dependence with respect to $\epsilon$, we rewrite $u_\epsilon$ denoting the
 solution of \eqref{eq7} and $c_\epsilon$ denotes the critical value of $I_{\epsilon}(u_\epsilon)$. By Theorem \ref{thm}, for any $0<\epsilon<\epsilon^{*}$, $u_{\epsilon}$ is also the solution of system \eqref{eq2}. Consequently, $\epsilon u_{\epsilon}$ is the solution of system $\eqref{eq1}$.
Clearly, $\{u_{\epsilon}\}$ is a bounded sequence in $H^{1}_{r}(\mathbb{R}^3)$.

Recalling \eqref{I0} and \eqref{eq5.1},
it is easy to say $I_0\in C^{1}$ in $H^{1}_{r}(\mathbb{R}^3)$.  Since $H^1_r(\mathbb R^3)$ is compactly embedded in $L^p(\mathbb{R}^3)(2<p<6)$, $I_0$ satisfies the (PS) condition.

By \eqref{eq6}, one has
\begin{equation*}
		\begin{split}
		\lim\limits_{\epsilon\to0}\left(I_{\epsilon}(u_{\epsilon})-I_{0}(u_{\epsilon})\right)=	\lim\limits_{\epsilon\to0}\frac{1}{\epsilon^2}\int_{\mathbb R^3}\tilde F(\epsilon u_{\epsilon})\text{d}x=0.
	\end{split}\end{equation*}
Similarly, for any $v\in H^{1}_{r}(\mathbb{R}^3)$, it implies
\begin{equation*}
		\begin{split}
		\lim\limits_{\epsilon\to0}\langle I_{\epsilon}'(u_{\epsilon})-I_{0}'(u_\epsilon),v\rangle=	\lim\limits_{\epsilon\to0}\frac{1}{\epsilon}\int_{\mathbb R^3}\tilde f(\epsilon u_{\epsilon})v\text{d}x=0.
	\end{split}\end{equation*}
Thus $\{u_\epsilon\}$ is a (PS) sequence of $I_0$. Then by the (PS) condition of $I_0$, we have
$$u_{\epsilon}\to u_0~\text{strongly~in}~H^{1}_{r}(\mathbb{R}^3)\ ~\text{as}~\epsilon\to0,$$
where $u_0$ is a critical point of $I_0$ and it is also a weak solution of equation \eqref{eq5.1}. Recall that $\lambda=\frac{1}{\epsilon^2}$ and $u^{\lambda}=\epsilon u_{\epsilon}$, hence Theorem \ref{thm3} has been proved.

	\medskip
	\textbf{Acknowledgement.}
	C. Huang was supported by China Postdoctoral Science Foundation (No.2020M682065). S. Liang was supported by  the Young Outstanding Talents Project of Scientific Innovation and entrepreneurship in Jilin (No. 20240601048RC), the National Natural Science
Foundation of China (No. 12371455). L. Ma was supported by National Natural Science Foundation of China (No. 12301288). P. Pucci is a member of the  Gruppo Nazionale per
l'Analisi Ma\-te\-ma\-ti\-ca, la Probabilit\`a e le loro Applicazioni (GNAMPA) of the  Instituto Nazionale di Alta Matematica (INdAM)
and this paper was written under the auspices of GNAMPA--INdAM.

	\nocite{*}
	\bibliographystyle{abbrv}
	\bibliography{ref}
\end{document}